\setlist{itemsep=1pt,parsep=0pt,topsep=3pt,partopsep=0pt}  
\def\itm#1{\rm ({#1})} 
\def\itmit#1{\itm{\it #1\,}} 
\def\rom{\itmit{\roman{*}}} 
\def\abc{\itmit{\alph{*}}}
\theoremstyle{break}
\newtheorem{thm}                   {Theorem}
\newtheorem{lem}           [thm] {Lemma}   
\newtheorem{cor}       [thm] {Corollary}   
\newtheorem{prop}     [thm] {Proposition}
\newtheorem{defi}      [thm] {Definition} 
\newtheorem{claim}     [thm] {Claim}
\newcommand{\openbox}{\leavevmode
  \hbox to.77778em{%
  \hfil\vrule
  \vbox to.675em{\hrule width.6em\vfil\hrule}%
  \vrule\hfil}}
\newcommand{\proofname}{Proof}
\newcounter{proof}%
\newenvironment{proof}[1][]{
  \th@nonumberplain
  \def\theorem@headerfont{\itshape}%
  \normalfont
  \@thm{proof}{proof}{\proofname{} {#1}\unskip.}}%
  {\@endtheorem}
\newcommand{\hide}[1]{}
\newcommand{\parti}[2][k]{\left(#2_{i,j}\right)_{i\in [#1],j\in [r]}}  
\renewcommand{\tilde}{\widetilde}
\newcommand{\field}[1]{\mathbb{#1}}
\newcommand{\N}{\field{N}}
\newcommand{\eps}{\epsilon}
\renewcommand{\epsilon}{\varepsilon}
\renewcommand{\phi}{\varphi}
\DeclareMathOperator{\dcup}{\ensuremath{\mathaccent\cdot \cup}}  
\newcommand{\deff}{\mathrel{\mathop:}=}
\newcommand{\ffed}{\mathrel{=\mathop:}}
\newcommand{\cH}{\mathcal{H}}
\newcommand{\cI}{\mathcal{I}}
\newcommand{\By}[2]{\overset{\mbox{\tiny{#1}}}{#2}}    
\newcommand{\ByRef}[2]{   \By{\eqref{#1}}{#2} }      
\newcommand{\eqByRef}[1]{ \ByRef{#1}{=} }
\newcommand{\geByRef}[1]{ \ByRef{#1}{\ge} }
\DeclareMathOperator{\bw}{bw}
\DeclareMathOperator{\Left}{Left}
\DeclareMathOperator{\Right}{Right}
\newcommand{\EMAIL}[1]{  \textit{E-mail}: \texttt{#1} } 
\title{Spanning embeddings of arrangeable graphs with sublinear bandwidth%
  \thanks{
    AT and AW were partially supported by DFG grant TA 309/2-2.
    The authors are grateful to the Department of Mathematics at the London
    School of Economics and the Women for Math Science-programme of the Zentrum Mathematik at Technische Universit\"at M\"unchen for financially supporting this collaboration.
    The cooperation of the three authors was supported by a joint CAPES-DAAD project (415/ppp-probral/po/D08/11629, Proj. no. 333/09).
 }
}
  \author{
    Julia B\"ottcher\thanks{%
      Department of Mathematics, London School of Economics, Houghton
      Street, London WC2A 2AE, UK
      \EMAIL{j.boettcher@lse.ac.uk}
    }
    \and Anusch Taraz\thanks{%
      Zentrum Mathematik, Technische Universit\"at M\"unchen, 
      Boltzmannstra\ss{}e~3, D-85747 Garching bei M\"unchen, Germany
      \EMAIL{taraz|wuerfl@ma.tum.de}
    }
    \and Andreas W\"urfl\footnotemark[3]
  } 
\begin{document}


\maketitle


\begin{abstract}
  The Bandwidth Theorem of B\"ottcher, Schacht and Taraz [Mathematische 
  Annalen 343 (1), 175--205]
  gives minimum degree conditions for the
  containment of spanning graphs~$H$ with small bandwidth and bounded
  maximum degree.
  We generalise this result to $a$-arrangeable graphs~$H$ with
  $\Delta(H)\le\sqrt{n}/\log n$, where~$n$ is the number of vertices of~$H$.

  Our result implies that sufficiently large $n$-vertex graphs~$G$ with minimum degree 
  at least $(\frac34+\gamma)n$ contain
  almost all planar graphs on~$n$ vertices as subgraphs.
  Using techniques developed by Allen, Brightwell and  Skokan [Combinatorica, to appear] we can also apply
  our methods to show that almost all planar graphs~$H$ have Ramsey number at
  most $12|H|$. We obtain corresponding results for graphs embeddable on
  different orientable surfaces.
\end{abstract}


\section{Introduction}
The existence of spanning subgraphs in dense graphs has been investigated very successfully over the past decades. Its early stages can be traced back to results by Dirac~\cite{Dirac52} in 1952, who showed that a minimum degree of $n/2$ forces a Hamilton cycle in graphs of order $n$,
and Corr\'adi and Hajnal~\cite{CorHaj63} in 1963 as well as Hajnal and Szemer\'edi~\cite{HajSze70} in 1970, who proved that every graph $G$ with $\delta(G)\ge \tfrac{r-1}r n$ must contain a family of $\lfloor n/r\rfloor$ vertex disjoint cliques, each of size $r$. 
The story gained new momentum when, in a series of papers in the 1990s, Koml\'os, Sark\"ozy, and Szemer\'edi established a new methodology which, based on the Regularity Lemma and the Blow-up Lemma, paved the road to a series of results for spanning subgraphs with bounded maximum degree, 
such as powers of Hamilton cycles, trees, $F$-factors, and planar graphs 
(see the survey~\cite{KueOstSurvey} for an excellent overview
of these and related achievements).

During that period, Bollob\'as and Koml\'os~\cite{Komlos99} formulated a
general conjecture which (approximately) included many of the results mentioned above. 
B\"ottcher, Schacht and Taraz proved this conjecture.

\begin{thm}[B\"ottcher, Schacht, Taraz~\cite{BoeSchTar09}] 
\label{thm:BolKom}
For all $r,\Delta\in\N$ and $\gamma>0$, there exist constants $\beta>0$ and $n_0\in\N$ such that for every $n\ge n_0$ the following holds. 
If $H$ is an $r$-chromatic graph on $n$ vertices with $\Delta(H)\le\Delta$ and bandwidth at most $\beta n$ and if $G$ is a graph on $n$ vertices with minimum degree $\delta(G)\ge \left(\tfrac{r-1}r+\gamma\right)n$, then $G$ contains a copy of $H$.
\end{thm}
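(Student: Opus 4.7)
The plan is to combine the Regularity Lemma, the Blow-up Lemma, and a structural ``lemma for $H$'' that turns the bandwidth hypothesis into a usable partition.

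First I would apply Szemer\'edi's Regularity Lemma to $G$ (with a sufficiently small regularity parameter and a lower bound on the number of clusters dictated by $\beta$), obtaining a vertex partition $V_0, V_1, \ldots, V_t$ and reduced graph $R$ whose edges correspond to $\epsilon$-regular pairs of density at least some $d>0$. A standard degree computation shows that $R$ inherits minimum degree at least $\left(\tfrac{r-1}{r}+\tfrac{\gamma}{2}\right)t$. A Hajnal--Szemer\'edi-type result then yields an almost-spanning collection of vertex-disjoint copies of $K_r$ in $R$. In order to embed a graph $H$ that stretches across many clusters, these cliques must be linked: I would strengthen the structure to a ``backbone'' (a connected sequence of $K_r$-copies where consecutive cliques share an $(r-1)$-set in the reduced graph, together with some ``buffer'' cliques) so that the resulting blown-up graph in $G$ contains a spanning ``Hamilton-like'' structure aligned with the $r$-chromatic pattern of $H$.

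In parallel I would prove a ``lemma for $H$'': using that $H$ has bandwidth at most $\beta n$, order $V(H)$ as $x_1, \ldots, x_n$ with all edges of span $\le \beta n$, and chop this order into short intervals $W_1, \ldots, W_s$ of length roughly $\beta n$. Because edges only connect vertices at distance $\le \beta n$, each $W_i \cup W_{i+1}$ spans a subgraph of bounded chromatic number $r$, and the proper $r$-colourings can be stitched together into a global $r$-colouring of $H$ that is compatible with the interval structure. Assigning consecutive intervals to consecutive cliques of the backbone then defines a ``homomorphism'' $\phi \colon V(H) \to V(R)$ such that vertices in $H$ adjacent to a vertex of a neighbouring interval are mapped to a shared clique, and the preimage sizes $|\phi^{-1}(i)|$ equal the cluster sizes $|V_i|$. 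Achieving this balance exactly is delicate and requires locally redistributing vertices of $H$ between the clusters assigned to a common $K_r$ in the backbone; this is possible because each interval of $H$ has few edges crossing to the next one.

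Finally, I would embed $H$ into $G$ via (an extension of) the Blow-up Lemma with image restrictions: vertices lying in the interior of an interval $W_i$ are embedded freely into the corresponding cluster $V_{\phi(v)}$, while the $O(\beta n)$ ``boundary'' vertices of $W_i$ (which may have neighbours in $W_{i+1}$) are pre-assigned candidate sets in a shared common-neighbourhood cluster to guarantee that inter-block edges are realised in $G$. Since $\Delta(H) \le \Delta$ is bounded and the number of boundary vertices per cluster is tiny compared to $|V_i|$, the blow-up hypotheses remain satisfied after these restrictions.

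The main obstacle, I expect, is precisely this balancing-and-linking step: producing a homomorphism $\phi$ whose preimage sizes match the cluster sizes exactly while simultaneously routing every edge of $H$ through a regular pair of $R$ and leaving enough free room in each cluster to absorb the image-restricted vertices. Everything else (regularity, the reduced-graph min-degree, the Blow-up Lemma) is fairly standard; the novel machinery of \cite{BoeSchTar09} is the combinatorial lemma that translates the bandwidth condition on $H$ into such a balanced, clique-respecting assignment.
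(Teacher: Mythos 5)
Your proposal follows essentially the same route as the paper's framework (which is imported from the original proof in \cite{BoeSchTar09}): Regularity Lemma plus a reduced-graph backbone $K_k^r\subseteq B_k^r\subseteq R_k^r$ obtained from the minimum degree (Lemma~\ref{lem:forG}), a bandwidth-based partition and colour-balancing lemma for $H$ (Lemma~\ref{lem:forH}, which balances the colouring by switching colours with an auxiliary colour~$0$ in zero-free blocks), and a Blow-up Lemma with image restrictions confining the few boundary vertices. The only mechanical deviations are minor: the paper links consecutive \emph{disjoint} cliques by complete bipartite graphs minus a perfect matching (the graph $B_k^r$) rather than by shared $(r-1)$-sets, and the exact size-matching you rightly flag as the delicate point is resolved on the $G$-side rather than by redistributing vertices of $H$ --- property~\ref{lem:forG:R3} of Lemma~\ref{lem:forG} allows one to re-partition $V(G)$ so that $|V_{i,j}|$ equals $|f^{-1}(i,j)|$ for any prescribed sizes within $\xi_0 n$ of the targets $m_{i,j}$.
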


Here a graph $H$ has \emph{bandwidth} at most $b$ if there exists a
labelling of the vertices by numbers $1,\dots,n$ such that for every edge
$\{i,j\}\in E(H)$ we have $|i-j|\le b$. It is well known that the
restriction on the bandwidth in Theorem~\ref{thm:BolKom} cannot be omitted.
On the other hand, powers of Hamilton cycles and $F$-factors have constant
bandwidth. Moreover, bounded degree planar graphs and more generally any
hereditary class of bounded degree graphs with small separators have
bandwidth at most $O(n/\log n)$ (see~\cite{BPTW10}). Hence a rich class of
graphs~$H$ is covered by Theorem~\ref{thm:BolKom}.

However, a major constraint of this theorem is that it allows only~$H$ with
constant maximum degree. In fact this is also true for most other results on
spanning subgraphs mentioned above.
There are only few exceptions, such as a result by Koml\'os, Sark\"ozy, and
Szemer\'edi~\cite{KSS_trees}, which shows that each sufficiently large graph
with minimum degree at least $(\tfrac12+\gamma)n$ contains all
spanning trees of maximum degree $o(n/\log n)$.

One aim of this paper is to obtain a corresponding embedding result
for a more general class of graphs with unbounded maximum degree. More
precisely, we will generalise Theorem~\ref{thm:BolKom} to graphs with
unbounded maximum degrees.
We focus on arrangeable graphs.

\begin{defi}[$a$-arrangeable]  \label{def:arrangeable}
Let $a$ be an integer. A graph is called $a$-arrangeable if its vertices can be ordered as $(x_1,\dots ,x_n)$ in such a way that 
$\big|N\big(N(x_i)\cap\Right_i))\cap\Left_i\big)\big| \le a$ for each $1 \le i \le n$, where $\Left_i = \{x_1, x_2,\dots, x_i \}$ and $\Right_i = \{x_{i+1}, x_{i+2},\dots, x_n\}$. 
\end{defi}

Arrangeability was introduced by Chen and Schelp~\cite{CheSch93}.
It generalises the concept of bounded maximum degree because graphs
with maximum degree~$\Delta$ are clearly $(\Delta^2-\Delta+1)$-arrangeable,
and stars are $1$-arrangeable.  Moreover several important graph classes
were shown to be constantly arrangeable: planar graphs are
$10$-arrangeable~\cite{KieTro93} (see also~\cite{CheSch93}) and graphs
without a $K_p$-subdivision are $p^8$-arrangeable~\cite{RodTho97}.

Our main result asserts that we can replace the constant maximum degree
bound in Theorem~\ref{thm:BolKom} by $a$-arrangeability and $\Delta(H)\le
\sqrt n/\log n$.

\begin{thm}[The bandwidth theorem for arrangeable graphs] \label{thm:BolKom:arr}
For all $r,a\in\N$ and $\gamma>0$, there exist constants $\beta>0$ and $n_0\in\N$ such that for every $n\ge n_0$ the following holds. 
If $H$ is an $r$-chromatic, $a$-arrangeable graph on $n$ vertices with $\Delta(H)\le \sqrt{n}/\log n$ and bandwidth at most $\beta n$ and if $G$ is a graph on $n$ vertices with minimum degree $\delta(G)\ge \left(\tfrac{r-1}r+\gamma\right)n$, then $G$ contains a copy of $H$.
\end{thm}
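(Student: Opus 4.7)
\medskip
\noindent\textbf{Proof plan.}
The plan is to follow the high-level strategy of the proof of Theorem~\ref{thm:BolKom}, namely the Regularity method combined with an embedding lemma, and to replace the classical Blow-up Lemma of Koml\'os--S\'ark\"ozy--Szemer\'edi (which requires $\Delta(H)=O(1)$) by a version that is able to embed $a$-arrangeable graphs of maximum degree up to $\sqrt{n}/\log n$. Concretely, I would fix a constant hierarchy
\[
 1/n \ll \beta \ll 1/k_0 \ll d \ll \epsilon \ll \gamma,1/r,1/a
\]
and apply the Szemer\'edi Regularity Lemma to $G$ to obtain an $\epsilon$-regular partition $V_0 \dcup V_1 \dcup \dots \dcup V_k$ of $V(G)$ with reduced graph $R$ of minimum degree at least $(\tfrac{r-1}{r}+\tfrac{\gamma}{2})k$ after removing low-density pairs.

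The next two steps are essentially those of \cite{BoeSchTar09} and use only the minimum degree condition and the bandwidth of $H$. First, the \emph{lemma for $G$} produces inside $R$ a ``backbone'' structure consisting of a sequence of $K_r$'s where consecutive cliques share $r-1$ vertices, together with a corresponding $r$-colouring of the clusters; all pairs of clusters occurring in this backbone are $\epsilon$-regular of density at least $d$. Second, the \emph{lemma for $H$} exploits the proper $r$-colouring of $H$ and its bandwidth at most $\beta n$ to partition $V(H)$ into pieces $W_1,\dots,W_k$ of sizes matching $|V_1|,\dots,|V_k|$, in such a way that every edge of $H$ runs either inside some $W_i$ or between $W_i$ and $W_j$ whose clusters form an edge of the backbone. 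The small bandwidth is precisely what guarantees this locality of edges.

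The heart of the argument, and also the main obstacle, is an embedding step: the pieces $W_i$ have to be realised inside the clusters $V_i$, respecting the edges of $H$, and this is the point at which the classical Blow-up Lemma fails because $\Delta(H)$ is allowed to grow with $n$. I would replace it by a \emph{blow-up lemma for $a$-arrangeable graphs}, which embeds the vertices of $H$ one by one in an $a$-arrangeable order $x_1,\dots,x_n$, maintaining for each unembedded vertex $y$ a candidate set in its target cluster. The crucial feature of an $a$-arrangeable order is that, at the moment we embed~$x_i$, only at most $a$ already-embedded vertices in $\Left_i$ have neighbours of~$x_i$ in $\Right_i$; so the restrictions that~$x_i$ imposes on the candidate sets of later vertices are controlled by the constant~$a$ rather than by $\Delta(H)$. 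A randomised greedy strategy, analysed via Chernoff-type concentration together with a union bound over the $n$ embedding rounds and the $n$ vertices, keeps all candidate sets of linear size throughout the embedding; here the bound $\Delta(H)\le\sqrt{n}/\log n$ appears naturally as the threshold at which such concentration plus union bounds still succeed.

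The hard part will therefore be proving (or adapting) this arrangeable blow-up lemma with the degree bound $\sqrt{n}/\log n$: one has to design the random embedding so that, despite vertices of near-$\sqrt n$ degree, the typical shrinkage of candidate sets is governed by $a$, and so that exceptional vertices (those of unusually high degree in~$H$) are treated separately, e.g.\ embedded last into pre-reserved ``typical'' portions of the clusters whose common neighbourhoods in the regular pairs behave as expected. Once such a blow-up lemma is available, combining it with the lemma for $G$ and the lemma for $H$ as above completes the proof in the standard fashion.
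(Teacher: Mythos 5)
Your high-level route---Regularity Lemma for $G$, a bandwidth-based partition of $H$ as in \cite{BoeSchTar09}, and an arrangeable blow-up lemma in place of the Koml\'os--S\'ark\"ozy--Szemer\'edi one---is indeed the skeleton of the paper's proof, and you correctly locate the crucial new ingredient: a blow-up lemma for $a$-arrangeable graphs with $\Delta(H)\le\sqrt n/\log n$. The paper does not prove that lemma, however; it imports it from \cite{BKTW_blowup} (Theorem~\ref{thm:BK:Blow-up:arr:full}), and its proof there is indeed a random greedy embedding of roughly the shape you sketch, so this part of your plan is sound modulo a substantial citation. The genuine gap lies elsewhere, at the embedding step. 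A blow-up lemma producing a \emph{spanning} embedding needs \emph{super-regular} pairs, not merely $\eps$-regular pairs of density $d$ as your outline supplies: in a merely regular pair, individual vertices of $G$ may have no neighbours at all on the other side, so the final vertices cannot be embedded. In the partition coming from the lemma for $G$, super-regularity can only be guaranteed on the bounded-degree factor $K_k^r$ (the disjoint $r$-cliques), while the connecting edges of the backbone $B_k^r$ and the edges to the balancing vertices $s_i$ (which absorb the colour-$0$ vertices created when rebalancing the colouring of $H$) necessarily live in pairs that are only $(\eps,d)$-regular. Your plan treats all backbone pairs uniformly and never confronts this mismatch, so the single global application of the blow-up lemma would fail as stated. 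The paper resolves exactly this point: it strengthens the lemma for $H$ so that every edge using a non-super-regular pair is confined to a small controlled set $X$ (condition~\ref{lem:forH:H4}), and it proves a ``mixed'' blow-up lemma (Theorem~\ref{thm:Blow-up:arr:mixed}) by randomly augmenting $G$ to a graph $G'$ that is super-regular on all of $R$ and using image restrictions to force the $X$-vertices into sets $U_i$ of vertices with typical degrees, so that the embedding into $G'$ in fact uses only edges of $G$. Without this device (or the tedious alternative of many local blow-up applications stitched together by image restrictions), your outline does not go through.

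Two smaller points. Your constant hierarchy has $d\ll\epsilon$, but it must be $\epsilon\ll d$, since $(\eps,d)$-regularity is vacuous when $d<\eps$; and edges of $H$ cannot run ``inside some $W_i$''---the lemma for $H$ must map every edge of $H$ to an \emph{edge} of the reduced graph. Finally, applying the blow-up lemma to the whole reduced graph requires that $\eps$ depend only on $\Delta(R)$ and not on the number of clusters $k$ (otherwise the quantifiers clash with the Regularity Lemma); the paper secures this by pruning the reduced graph to a subgraph $R_k^r$ with $\Delta(R_k^r)\le 3r+2/\gamma$ via an evenly spread choice of covering vertices, a step absent from your sketch.
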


The key ingredient for generalising Theorem~\ref{thm:BolKom} to Theorem~\ref{thm:BolKom:arr} is 
a variant of the Blow-up Lemma for arrangeable graphs, obtained recently by 
B\"ottcher, Kohayakawa, Taraz, and Würfl in~\cite{BKTW_blowup} (see
Theorem~\ref{thm:BK:Blow-up:arr:full}). 

\paragraph{Applications.}

We give one direct application of Theorem~\ref{thm:BolKom:arr}
(Corollary~\ref{cor:app:1}), and one application which uses the techniques
needed in the proof of Theorem~\ref{thm:BolKom:arr} (Theorem~\ref{thm:app:2}).
Both applications concern graphs of fixed genus.

Let~$S$ be an orientable surface and denote by $g(S)$ the genus of~$S$.
Let $\cH_S(n)$ be the family of $n$-vertex graphs embeddable on~$S$
and let $\cH_S(n,\Delta)$ be the family of those graphs in $\cH_S(n)$ with
maximum degree at most~$\Delta$. The celebrated Four Colour Theorem~\cite{AppHak77,RobSanSeyRob97} and
the affirmative solution of Heawood's Conjecture~\cite{Hea90,RinYou68} guarantee
that each graph in $\cH_S(n)$ can be coloured with
\begin{equation}
\label{eq:Heawood}
  r(S):= \Big\lfloor\frac{7 + \sqrt{1+48g(S)}}{2}\Big\rfloor
\end{equation}
colours. Moreover, in~\cite{BPTW10} it was shown that graphs in
$H\in\cH_S(n,\Delta)$ have bandwidth at most
\begin{equation}
\label{eq:bwS}
  \bw(S,n,\Delta):=\frac{15n\log\Delta}{\log n - \log\min\big(1,g(S)\big)}\,.
\end{equation}

Hence, as observed in~\cite{BPTW10}, it is a direct consequence of
Theorem~\ref{thm:BolKom} that large $n$-vertex graphs~$G$ with minimum degree
at least $(\frac{r-1}{r}+\gamma)n$ contain all graphs from
$\cH_S(n,\Delta)$ as subgraphs, which extends results of K\"uhn, Osthus and
Taraz~\cite{KuOsTa05} (see also~\cite{KuhOst05}).
With the help of Theorem~\ref{thm:BolKom:arr} we are now able to say
considerably more -- namely, that in fact \emph{almost all} graph from
$\cH_S(n)$ are contained in each such graph~$G$. 

Indeed, McDiarmid and Reed~\cite{McDRee08} proved that
for each fixed~$S$, if we draw a graph~$H$ uniformly at random from
$\cH_S(n)$ then asymptotically almost surely~$H$ has maximum degree of order
\begin{equation}
\label{eq:DeltaS}
  \Delta(S,n):=\Theta_S(\log n)\,.
\end{equation}
Moreover, clearly $K_{r(S)+1}$ cannot be embedded in~$S$ and hence
graphs from $\cH_S(n)$ are $K_{r(S)+1}$-minor free. It thus follows from
the result of R\"odl and Thomas~\cite{RodTho97} mentioned above that the graphs
in $\cH_S(n)$ are $a(S)$-arrangeable with
\begin{equation}
\label{eq:aS}
a(S) := \big( r(S) + 1\big)^8\,.
\end{equation}
In conclusion, using~\eqref{eq:Heawood}, \eqref{eq:bwS}, \eqref{eq:DeltaS}
and~\eqref{eq:aS} we immediately obtain the following corollary of
Theorem~\ref{thm:BolKom:arr}.

\begin{cor} \label{cor:app:1} 
  Let $\gamma>0$, let $S$ be an orientable surface and let $G$ be an $n$-vertex graph
  with $\delta(G)\ge \big(\tfrac{r(S)-1}{r(S)}+\gamma\big)n$.  If~$H$
  is drawn uniformly at random from $\cH_S(n)$, then~$G$ contains~$H$
  almost surely.

  In particular, if $\delta(G)\ge(\frac34+\gamma)n$ then~$G$ contains
  almost all planar graphs on~$n$ vertices.
\end{cor}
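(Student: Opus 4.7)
The strategy is to apply Theorem~\ref{thm:BolKom:arr} directly with $r = r(S)$ and $a = a(S)$, after verifying that a uniformly random $H \in \cH_S(n)$ satisfies all of its hypotheses asymptotically almost surely.

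First I would invoke Theorem~\ref{thm:BolKom:arr} with the parameters $r(S)$, $a(S)$ and the given $\gamma$ to obtain constants $\beta > 0$ and $n_0$. Since $G$ already satisfies the required minimum degree condition $\delta(G) \ge \big(\tfrac{r(S)-1}{r(S)} + \gamma\big)n$ by hypothesis, it then suffices to check that, with probability $1 - o(1)$, the random graph $H$ is $r(S)$-chromatic, is $a(S)$-arrangeable, has $\Delta(H) \le \sqrt{n}/\log n$, and has bandwidth at most $\beta n$.

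Two of these properties are in fact deterministic. Every $H \in \cH_S(n)$ embeds on $S$, so by~\eqref{eq:Heawood} it is $r(S)$-colourable. Moreover, since $K_{r(S)+1}$ does not embed on $S$, every $H \in \cH_S(n)$ is $K_{r(S)+1}$-minor free, and hence $(r(S)+1)^8$-arrangeable by R\"odl--Thomason~\cite{RodTho97}, which equals $a(S)$ by~\eqref{eq:aS}. The remaining two conditions rely on the McDiarmid--Reed result~\cite{McDRee08}: a.a.s.\ $\Delta(H) = \Theta_S(\log n)$, which is in particular below $\sqrt{n}/\log n$ for large $n$, and feeding this bound into~\eqref{eq:bwS} yields a bandwidth of order $O_S(n \log\log n / \log n) = o(n)$, hence below $\beta n$ eventually. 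The ``in particular'' clause for planar graphs then follows by specialising to $S$ being the sphere, where $g(S) = 0$ and $r(S) = 4$, turning the degree threshold into $\big(\tfrac{3}{4} + \gamma\big)n$.

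There is no real technical obstacle here: all of the substantial ingredients -- Theorem~\ref{thm:BolKom:arr}, the arrangeability of minor-free graphs, the bandwidth bound for graphs on surfaces, and the typical maximum degree of a random embeddable graph -- are already assembled in the excerpt, and the task reduces to bookkeeping their combination.
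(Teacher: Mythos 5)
Your proposal is correct and follows exactly the paper's own derivation: the corollary is obtained by feeding $r(S)$ from~\eqref{eq:Heawood}, the arrangeability bound~\eqref{eq:aS} via $K_{r(S)+1}$-minor-freeness and~\cite{RodTho97}, the a.a.s.\ maximum degree~\eqref{eq:DeltaS} of McDiarmid--Reed, and the resulting bandwidth bound~\eqref{eq:bwS} directly into Theorem~\ref{thm:BolKom:arr}, with the planar case being $g(S)=0$, $r(S)=4$. The only blemish is cosmetic: the arrangeability result for graphs without a $K_p$-subdivision is due to R\"odl and \emph{Thomas}, not Thomason.
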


Our second application concerns Ramsey numbers of graphs in $\cH_S(n)$. For
a graph~$H$ we denote by $R(H)$ the two-colour Ramsey number of~$H$.
Allen, Brightwell, and Skokan~\cite{AlBrSk10} proved that graphs with
bounded maximum degree and small bandwidth have small Ramsey numbers.

\begin{thm}[Allen, Brightwell and Skokan~\cite{AlBrSk10}] 
  \label{thm:AlBrSk} 
  For all $\Delta\in\N$, there exist constants $\beta>0$ and $n_0$ such
  that for every $n\ge n_0$ the following holds.
  If~$H$ is an $n$-vertex graph with maximum degree at most
  $\Delta$ and $\bw(H)\le\beta n$, then $R(H)\le (2\chi(H)+4)n$.
\end{thm}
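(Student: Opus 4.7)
The strategy is to apply the regularity method to a 2-edge-colouring of $K_N$ with $N=(2\chi(H)+4)n$, and then to combine a Ramsey--Tur\'an type structural lemma with the Bandwidth Theorem (Theorem~\ref{thm:BolKom}) itself. Write $r=\chi(H)$ and consider an arbitrary 2-colouring of $E(K_N)$ with colours red and blue. First, I would apply Szemer\'edi's Regularity Lemma simultaneously to both colour classes, obtaining an equipartition $V_1,\ldots,V_k$ of $V(K_N)$ in which almost every pair $(V_i,V_j)$ is $\epsilon$-regular in both colours. Define a 2-edge-coloured reduced graph $R$ on $[k]$, assigning each edge the colour whose density dominates in the corresponding regular pair.

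The heart of the argument is a Ramsey--Tur\'an type structural lemma stating that, in any 2-edge-colouring of $K_k$ (for $k$ large), one of the colour classes contains a subgraph $R'$ on $k'\ge k/(2r+4)$ vertices with $\delta(R')\ge \big(\tfrac{r-1}{r}+\tfrac{\gamma}{2}\big)k'$. To prove this, one starts from the fact that the majority colour in $K_k$ has edge density at least $\tfrac12$, and then combines an extremal argument (controlling $K_{r+1}$-free subgraphs in the minority colour, e.g.\ via a stability version of the Erd\H{o}s--Stone--Simonovits theorem) with an iterative vertex-deletion step that boosts the minimum degree without shrinking the vertex set too much. The choice of the constant $2r+4$ is precisely what allows these two budgets---vertex count and minimum-degree fraction---to be satisfied simultaneously.

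Once $R'$ is in hand, let $W=\bigcup_{i\in V(R')} V_i$ and take $G'$ to be the subgraph of $K_N$ induced on $W$ in whichever colour is majority on $R'$. Since $R'$ has the required min-degree fraction and the underlying pairs are regular, after discarding a small number of atypical clusters and vertices one obtains a graph $G'$ on $|W|\ge n$ vertices with $\delta(G')\ge \big(\tfrac{r-1}{r}+\tfrac{\gamma}{4}\big)|W|$. Theorem~\ref{thm:BolKom}, applied to $G'$ with parameters $(r,\Delta,\gamma/4)$, then embeds $H$ into $G'$, yielding a monochromatic copy of $H$ in $K_N$.

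The main obstacle, I expect, is the structural lemma in the middle step: standard density arguments easily yield a monochromatic subgraph with the correct min-degree \emph{ratio}, but securing enough vertices for the embedding---with the sharp leading constant $2\chi(H)+4$---requires a delicate stability analysis of the extremal 2-colourings of $K_N$. The surrounding regularity reduction and the final appeal to the Bandwidth Theorem are by now standard tools; it is really the tight Ramsey--Tur\'an lemma, and the interplay between the two colours at its extremal configurations, that drives both the truth and the precise constant in the statement.
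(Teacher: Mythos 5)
Your reduction hinges on the structural lemma in the middle step, and that lemma is false. Take the uniformly random red/blue colouring of $K_k$: with high probability both colour classes are quasirandom of density $\tfrac12$, so for \emph{every} vertex set $S$ of linear size, every vertex of $S$ has monochromatic degree $(\tfrac12+o(1))|S|$ inside $S$ in each colour. Since a subgraph of a colour class on vertex set $S$ has degrees at most those of the induced monochromatic graph on $S$, neither colour contains a subgraph $R'$ on $k'\ge k/(2r+4)$ vertices with $\delta(R')\ge\big(\tfrac{r-1}{r}+\tfrac{\gamma}{2}\big)k'$: for every $r\ge 2$ the required ratio is at least $\tfrac12+\tfrac{\gamma}{2}$, strictly above what the random colouring provides. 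No stability analysis can rescue this --- the random colouring is not an ``extremal configuration'' to be analysed away, it is a genuine colouring in which your claimed monochromatic high-minimum-degree host simply does not exist. Consequently the final appeal to Theorem~\ref{thm:BolKom} never gets off the ground, and the whole approach of reducing the Ramsey statement to a minimum-degree embedding theorem fails. (The minor point that Theorem~\ref{thm:BolKom} embeds a \emph{spanning} $H$ could have been fixed by padding $H$ with isolated vertices; that is not the problem.)

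The reason the theorem is nevertheless true in the random colouring is that a dense quasirandom graph contains $H$ for reasons that have nothing to do with minimum degree, and the actual proof (of Allen, Brightwell and Skokan, followed in this paper in the proof of Theorem~\ref{thm:AlBrSk:ext}) exploits exactly this: after a coloured regularity partition (Lemma~\ref{lem:RamseyPartition}\ref{lem:RamseyPartition:a}), one finds in the two-coloured reduced graph a \emph{monochromatic copy of the cycle power} $C^r_m$, which exists in every two-colouring of $K_k$ as soon as $k\ge(2r+3)m$ (Lemma~\ref{lem:RamseyPartition}\ref{lem:RamseyPartition:b}) --- this Ramsey bound for $C^r_m$, not a stability analysis of min-degree hosts, is where the constant $2\chi(H)+4$ comes from. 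One then uses the bandwidth hypothesis to build a homomorphism $f\colon H\to C^r_m$ with nearly balanced preimages $|f^{-1}(i)|\le(1+\xi)\tfrac nm$ (Lemma~\ref{lem:forH:2}), and embeds $H$ along the clusters of the monochromatic $C^r_m$ via the Blow-up Lemma (Theorem~\ref{thm:BK:Blow-up:arr:full}), after trimming the clusters to make the relevant pairs super-regular. Crucially, $H$ occupies only a $\tfrac{1}{2r+4}$-fraction of the host, and the Blow-up Lemma needs only regular pairs of density about $\tfrac12$ (the majority colour) along $C^r_m$; no global minimum-degree condition on a monochromatic subgraph is ever required, which is precisely why the argument survives the quasirandom colourings on which your lemma breaks.
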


With the help of~\eqref{eq:Heawood} and~\eqref{eq:bwS} this implies that
for any fixed orientable surface~$S$ and any fixed $\Delta$ each graph
$H\in\cH_S(n,\Delta)$ satisfies $R(H)\le \big(2 r(S)+4\big)n$ if~$n$ is
sufficiently large. In particular, large planar graphs $H$ with bounded
maximum degree have Ramsey number $R(H)\le 12|H|$.

This together with the fact that planar graphs are known to have at most linear 
Ramsey number (see~\cite{CheSch93})
led  Allen, Brightwell, and Skokan to conjecture that
in fact \emph{all} sufficiently large planar graphs~$H$ have Ramsey number
at most $12|H|$. Combining their methods with ours we can now show that
this is true for \emph{almost all} planar graphs. 

\begin{thm} \label{thm:app:2} 
  Let $S$ be an orientable surface.  If~$H$ is drawn uniformly at random
  from $\cH_S(n)$, then almost surely $R(H)\le (2r(S)+4)n$.

  In particular, for almost every planar graph $H$ we have $R(H) \le
  12|H|$.
\end{thm}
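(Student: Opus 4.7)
The plan is to combine the Ramsey-type machinery of Allen, Brightwell and Skokan behind Theorem~\ref{thm:AlBrSk} with our extension of the Bandwidth Theorem to arrangeable graphs, Theorem~\ref{thm:BolKom:arr}. The first order of business is to verify that a random $H \in \cH_S(n)$ almost surely satisfies the hypotheses of Theorem~\ref{thm:BolKom:arr}. From~\eqref{eq:Heawood} we have $\chi(H) \le r(S)$; from~\eqref{eq:aS}, $H$ is $a(S)$-arrangeable; by McDiarmid-Reed and~\eqref{eq:DeltaS}, almost surely $\Delta(H) = \Theta_S(\log n) \le \sqrt{n}/\log n$; and~\eqref{eq:bwS} then gives $\bw(H) \le O_S(n\log\log n / \log n) = o(n)$ almost surely. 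Hence we may assume $H$ possesses all four properties.

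Next, set $N = (2r(S)+4)n$ and consider any two-colouring of $E(K_N)$ into classes $G_{\mathrm{red}}$ and $G_{\mathrm{blue}}$. I would then follow the Allen-Brightwell-Skokan strategy in three stages: \emph{(i)} apply a two-coloured Regularity Lemma to the pair $(G_{\mathrm{red}}, G_{\mathrm{blue}})$, producing a reduced two-edge-coloured graph; \emph{(ii)} exploit the ABS combinatorial dichotomies (which rely precisely on $N$ being large enough, i.e.\ $(2\chi+4)n$) to locate inside the reduced graph a monochromatic cluster subgraph on at least $n$ clusters carrying a spanning ``$(\chi{+}1)$-partite'' backbone with super-regular pairs, together with a buffer for balancing; \emph{(iii)} embed~$H$ into the corresponding monochromatic host $G_0 \subseteq G_{\mathrm{red}} \cup G_{\mathrm{blue}}$ on $(1-o(1))N$ vertices, which has $\delta(G_0) \ge \bigl(\tfrac{r(S)-1}{r(S)}+\gamma'\bigr)n$.

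For stage~\emph{(iii)} the ABS argument invokes the bounded-degree Bandwidth Theorem (Theorem~\ref{thm:BolKom}); I would substitute our Theorem~\ref{thm:BolKom:arr}, effectively calling the arrangeable Blow-up Lemma of~\cite{BKTW_blowup}. The chromatic, arrangeability, degree and bandwidth conditions for~$H$ established in the first paragraph are exactly what Theorem~\ref{thm:BolKom:arr} demands. Specialising to the sphere ($r(S)=4$) then immediately yields $R(H) \le 12|H|$ for almost every planar~$H$.

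The main obstacle is that the ABS embedding step is not a pure black-box application of Theorem~\ref{thm:BolKom}: it is interwoven with a specific chromatic-plus-bandwidth partition of~$H$, a balancing procedure that shifts vertices between parts, and a tailored embedding lemma proved alongside Theorem~\ref{thm:BolKom}. Adapting this to the arrangeable setting forces one to redo the embedding lemma while simultaneously tracking the arrangeability ordering on $V(H)$ and handling vertices of degree up to $\sqrt{n}/\log n$ via the arrangeable Blow-up Lemma rather than its bounded-degree counterpart. I expect the bookkeeping required to make the ABS balancing compatible with the arrangeability-driven embedding of high-degree vertices to be the technically heaviest piece of the argument; everything else should transfer routinely from the Ramsey proof of~\cite{AlBrSk10} and from the proof of Theorem~\ref{thm:BolKom:arr}.
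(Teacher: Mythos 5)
Your first paragraph matches the paper's reduction exactly: the paper proves a deterministic statement (Theorem~\ref{thm:AlBrSk:ext}), namely $R(H)\le(2\chi(H)+4)n$ for every $a$-arrangeable $n$-vertex $H$ with $\Delta(H)\le\sqrt{n}/\log n$ and $\bw(H)\le\beta n$, and deduces Theorem~\ref{thm:app:2} from \eqref{eq:Heawood}, \eqref{eq:bwS}, \eqref{eq:DeltaS} and \eqref{eq:aS} just as you do. The gap is in your stages \emph{(ii)}--\emph{(iii)}.

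As stated, stage \emph{(iii)} cannot work. A monochromatic host $G_0$ on $(1-o(1))N=(1-o(1))(2r+4)n$ vertices with $\delta(G_0)\ge\bigl(\tfrac{r-1}{r}+\gamma'\bigr)n$ has \emph{relative} minimum degree only about $\tfrac{1}{2r+4}$, whereas Theorem~\ref{thm:BolKom:arr} requires a host on $n$ vertices (or, after padding $H$ with isolated vertices, on $|V(G_0)|$ vertices) with minimum degree at least $\bigl(\tfrac{r-1}{r}+\gamma\bigr)|V(G_0)|$. If instead you intend to extract a monochromatic subgraph on at least $n$ vertices with relative minimum degree $\tfrac{r-1}{r}+\gamma$, no such subgraph need exist: in a uniformly random red/blue colouring of $K_{(2r+4)n}$, a first-moment computation shows that with high probability every monochromatic subgraph on $v\ge C\log n$ vertices has minimum degree at most $(\tfrac12+o(1))v$, while already for planar graphs one would need fraction $\tfrac34+\gamma$. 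This is exactly why Theorem~\ref{thm:AlBrSk} is not a black-box corollary of the Bandwidth Theorem, and why substituting Theorem~\ref{thm:BolKom:arr} into the ABS proof cannot succeed in the form you propose. What the ABS machinery actually produces --- and what the paper uses --- is a monochromatic copy of the cycle power $C_m^r$ in the coloured reduced graph of the two-coloured $K_{(2r+4)n}$ (Lemma~\ref{lem:RamseyPartition}); this structure exists even in the quasirandom colourings that defeat any minimum-degree host. The paper then constructs a homomorphism $f\colon H\to C_m^r$ with balanced preimages $|f^{-1}(i)|\le(1+\xi)\tfrac nm$ (Lemma~\ref{lem:forH:2}, a corollary of the Lemma for $H$, Lemma~\ref{lem:forH}), deletes at most $2r\eps|V_i'|$ vertices per cluster to upgrade the $(\eps,\tfrac12)$-regular pairs along $C_m^r$ to $(\eps,\tfrac14)$-super-regular ones, and applies the arrangeable Blow-up Lemma (Theorem~\ref{thm:BK:Blow-up:arr:full}) once, with no image restrictions; this is legitimate because the reduced graph $C_m^r$ has maximum degree $2r$ independent of the number of clusters. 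Your closing paragraph rightly senses that ABS's embedding lemma must be replaced in the arrangeable setting, but the replacement is this $C_m^r$-homomorphism route; the balancing bookkeeping you anticipate as the hard part largely disappears, since Lemma~\ref{lem:forH:2} balances the parts in advance, the slack $|V_i|\ge(1+\xi)\tfrac nm\ge|W_i|$ absorbs the discrepancies, and the embedding into the clusters of $C_m^r$ need not be spanning.
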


\paragraph{Organisation.}

In Section~\ref{sec:outline} we give an outline of our proof of
Theorem~\ref{thm:BolKom:arr}. This proof builds on partitioning results
for~$G$ and for~$H$, which we present in Section~\ref{sec:Lemmas}, and on a
variant of the  Blow-up Lemma for arrangeable graphs, which we discuss
in Section~\ref{sec:BlowUp}. We then present the actual
proof of Theorem~\ref{thm:BolKom:arr} in Section~\ref{sec:Proof:1}. 
We close with the proof of Theorem~\ref{thm:app:2} in
Section~\ref{sec:Proof:3} and with some concluding remarks in Section~\ref{sec:concl}.

\section{Outline}
\label{sec:outline}

Many of the results concerning the embedding of spanning,
bounded degree graphs follow a general agenda which is nicely
described in the survey paper~\cite{Komlos99} by Koml\'os. This agenda
consists of five main steps:  firstly preparing~$H$, secondly preparing~$G$,
thirdly assigning parts of~$H$ to parts of~$G$, fourthly connecting those
parts, and fifthly embedding the parts of~$H$ separately, via the Blow-up
Lemma.

In the proof of Theorem~\ref{thm:BolKom:arr} we follow a similar agenda.
The preparation for~$G$ uses, as is usual, Szemer\'edi’s Regularity Lemma
and some additional work to produce a suitable partition of~$G$. For this
step we can make use of a lemma from~\cite{BoeSchTar09} (see
Lemma~\ref{lem:forG}).  

The preparation of~$H$  (see Lemma~\ref{lem:forH}) makes use of the bandwidth
of~$H$ and produces a partition of~$H$ which is compatible to the partition
of~$G$ (in this way we implicitly obtain an assignment of the parts of~$H$
to the parts of~$G$). This step is also similar to the methods
used in~\cite{BoeSchTar09} to partition bounded degree graphs~$H$. However,
we need to strengthen this approach because we now deal with graphs~$H$
whose degrees are no longer bounded by a constant. In other words, we need a
slightly different partitioning lemma for~$H$ in order to make this
partition suitable for the Blow-up lemma that we will use in the next step. 

In a final step we use the two partitions obtained to embed~$H$
into~$G$. Our approach here is slightly different from the steps described
by Koml\'os which are usually used (connecting the parts and embedding the
parts of~$H$ separately). We use the Blow-up Lemma for arrangeable
graphs, which was recently established in~\cite{BKTW_blowup}, to formulate
an embedding result (see Theorem~\ref{thm:Blow-up:arr:mixed})
which can handle super-regular and merely regular pairs
simultaneously and make use of a spanning subgraph of the reduced graph of
the partition for~$G$. This enables us to embed~$H$ into~$G$ at once.

\section{Lemmas for $G$ and $H$} \label{sec:Lemmas}

In this section we formulate a partitioning lemma for~$G$, which asserts
that~$G$ has a regular partition suitable for our purposes, and a
corresponding partitioning lemma for~$H$. Both these lemmas are tailored to
the application of the version of the Blow-up Lemma that we will give in the next section.

We first introduce some notation. Let~$G$, $H$ and $R$ be graphs with
vertex sets $V(G)$, $V(H)$, and $V(R)=\{1,\dots,s\}\ffed[s]$. For $v\in
V(G)$ and $S\subseteq V(G)$ we define $N(v,S)\deff N(v)\cap S$. Let
$A,B\subseteq V(G)$ be non-empty and disjoint, and let
$0\le \eps,\delta\le 1$. The \emph{density} of the pair $(A,B)$ is
$d(A,B):=e(A,B)/(|A||B|)$. The pair $(A,B)$ is \emph{$\eps$-regular}, if
$|d(A,B)-d(A',B')|\le\eps$ for all $A' \subseteq A$ and $B' \subseteq B$
with $|A'|\geq\eps|A|$ and $|B'|\geq\eps|B|$. An $\eps$-regular pair
$(A,B)$ is called \emph{$(\eps,\delta)$-regular} if $d(A,B)\ge\delta$, and
\emph{$(\eps,\delta)$-super-regular} if $|N(v,B)| \ge \delta|B|$ for all
$v\in A$ and $|N(v,A)|\ge \delta|A|$ for all $v\in B$.

Let $G$ have the partition $V(G)=V_1\dcup\dots\dcup V_s$ and $H$
have the partition $V(H)=W_1\dcup\dots\dcup W_s$. 
We say that $(V_i)_{i\in[s]}$ is \emph{$(\eps,\delta)$-(super-)regular on $R$} if
$(V_i,V_j)$ is an $(\eps,\delta)$-(super-)regular pair for every $ij\in
E(R)$. In this case~$R$ is also called \emph{reduced graph} of the
(super-)regular partition. The partition classes $V_i$ are also called
\emph{clusters}.

For all $n,k,r\in\N$, we call an integer partition $\parti{n}$ of~$n$ 
$r$-\emph{equitable}, if $|n_{i,j}-n_{i,j'}|\le 1$ for all $i\in[k]$ and $j,j'\in[r]$. Let $B_k^r$ be the $kr$-vertex graph obtained from a path on $k$ vertices by replacing every vertex by a clique of size $r$ and replacing every edge by a complete bipartite graph minus a perfect matching. More precisely, $V(B_k^r)=[k]\times[r]$ and
\begin{equation}
  \{ (i,j),(i',j')\} \in E(B_k^r) \text{\quad iff \quad $i=i'$ or $|i-i'|=1 \wedge j\neq j'$.}
\end{equation}
Let $K_k^r$ be the graph on vertex set $[k]\times[r]$ that is formed by the disjoint union of $k$ complete graphs on $r$ vertices. Obviously, $K_k^r\subseteq B_k^r$.

Now we can formulate the partition lemma for~$G$, which we take from~\cite[Lemma~6]{BoeSchTar09}.

\begin{lem}[Lemma for $G$~\cite{BoeSchTar09}] \label{lem:forG}
For all $r\in\N$ and $\gamma>0$ there exists $d>0$ and $\eps_0>0$ such that for every positive $\eps\le\eps_0$ there exist $K_0$ and $\xi_0>0$ such that for all $n\ge K_0$ and for every graph $G$ on vertex set $[n]$ with $\delta(G)\ge(\tfrac{r-1}r+\gamma)n$ there exist $k\in [K_0]$ and a graph $R_k^r$ on vertex set $[k]\times[r]$ with
\begin{enumerate}[label=\itmit{R\arabic{*}}]
\item\label{lem:forG:R1} $K_k^r\subseteq B_k^r\subseteq R_k^r$,
\item\label{lem:forG:R2} $\delta(R_k^r)\ge(\tfrac{r-1}r+\gamma/2)kr$, and
\item\label{lem:forG:R3} there is an $r$-equitable integer partition $(m_{i,j})_{i\in[k],j\in[r]}$ of $n$ with $(1+\eps)n/(kr)\ge m_{i,j}\ge (1-\eps)n/(kr)$ such that the following holds.\footnote{The upper bound on $m_{i,j}$ is implicit in the proof of Lemma~7 in~\cite{BoeSchTar09}.}
\end{enumerate}
For every partition $(n_{i,j})_{i\in[k],j\in[r]}$ of $n$ with $m_{i,j}-\xi_0n\le n_{i,j}\le m_{i,j}+\xi_0n$ there exists a partition $(V_{i,j})_{i\in[k],j\in[r]}$ of $V$ with
\begin{enumerate}[label=\itmit{G\arabic{*}}]
\item\label{lem:forG:G1} $|V_{i,j}|=n_{i,j}$,
\item\label{lem:forG:G2} $(V_{i,j})_{i\in[k],j\in[r]}$ is $(\eps,d)$-regular on $R_k^r$, and
\item\label{lem:forG:G3} $(V_{i,j})_{i\in[k],j\in[r]}$ is $(\eps,d)$-super-regular on $K_k^r$.
\end{enumerate}
\end{lem}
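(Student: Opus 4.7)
The plan is to derive the lemma via Szemer\'edi's Regularity Lemma combined with a structural analysis of the resulting reduced graph. First I would apply the Regularity Lemma to $G$ with some small parameter $\eps' \ll \eps$, obtaining an equitable $\eps'$-regular partition into $K$ clusters with reduced graph $R$, where edges of $R$ correspond to $\eps'$-regular pairs of density at least a fixed $d = d(\gamma, r) > 0$. A standard averaging argument, together with the small total number of vertices incident to irregular or low-density pairs, yields $\delta(R) \geq \bigl(\tfrac{r-1}{r} + \tfrac{3\gamma}{4}\bigr) K$.

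The central structural task is to produce in $R$ the graph $R_k^r \supseteq B_k^r$ with the prescribed minimum degree. I would carry this out in two stages. First, using the Hajnal--Szemer\'edi theorem, which is applicable since the minimum degree of $R$ comfortably exceeds the $K_r$-factor threshold, obtain a perfect $K_r$-factor $C_1, \dots, C_k$ (where $k = K/r$); these cliques will serve as the columns $\{i\} \times [r]$ of $R_k^r$ and automatically yield the $K_k^r$ part. Second, arrange the cliques into a linear chain so that between any two consecutive cliques $C_i, C_{i+1}$ the bipartite graph induced in $R$ is $K_{r,r}$ minus a (possibly empty) subgraph of a perfect matching. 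For this, build an auxiliary graph on $\{C_1,\dots,C_k\}$ with $C_i \sim C_j$ whenever the bipartite graph between them in $R$ has the required matching-only-missing form; the surplus $\gamma/4$ in the minimum degree of $R$ ensures that almost every pair of cliques is joined in this auxiliary graph, so it satisfies Dirac's condition and thus contains a Hamilton path. Relabelling within each $C_i$ to place any remaining bipartite non-edges onto the diagonal $\{(i,j)(i+1,j) : j \in [r]\}$ inserts $B_k^r$ into $R$. Define $R_k^r$ by taking on vertex set $[k] \times [r]$ all $R$-edges between the corresponding clusters; then $K_k^r \subseteq B_k^r \subseteq R_k^r$ by construction, and the remaining surplus gives $\delta(R_k^r) \geq \bigl(\tfrac{r-1}{r} + \tfrac{\gamma}{2}\bigr) kr$.

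With $R_k^r$ fixed, let $(m_{i,j})$ be any $r$-equitable integer partition of $n$ into $kr$ parts close to $n/(kr)$. Given a perturbation $(n_{i,j})$ of $(m_{i,j})$ with $|n_{i,j} - m_{i,j}| \leq \xi_0 n$, two final adjustments yield the required vertex partition. A balancing step starts from the regular clusters and redistributes at most $O(\xi_0 n + \eps n)$ vertices to match the target sizes $n_{i,j}$; by the standard slicing lemma this preserves $\eps$-regularity of each kept pair with only a constant-factor loss in the regularity parameter. A super-regularity repair on $K_k^r$ then proceeds column by column: for each pair of column clusters $(V_{i,j}, V_{i,j'})$, the few vertices with abnormally low degree into the other cluster are swapped out and replaced by vertices elsewhere in the same column, using the large $\delta(G)$ to guarantee that each swapped vertex has many neighbours in every cluster of its column. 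Choosing the hierarchy $\xi_0 \ll \eps \ll d \ll \gamma, 1/r$ ensures that $(\eps,d)$-regularity on all edges of $R_k^r$ and $(\eps,d)$-super-regularity on $K_k^r$ hold simultaneously.

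The main obstacle is the second stage of the structural task: producing the $B_k^r$-chain forces each vertex of $C_i$ to see at least $r-1$ of the $r$ vertices of $C_{i+1}$ in $R$, a condition that does not hold automatically for every pair of cliques in the $K_r$-factor. It is precisely the gap between the Hajnal--Szemer\'edi threshold $(r-1)/r$ and the hypothesis $(r-1)/r + \gamma$ on $\delta(G)$ that makes the auxiliary clique-graph argument possible, and this accounts for the loss of exactly half of the additive $\gamma$ in the transition to $\delta(R_k^r)$. The balancing and super-regularity adjustments at the end are then comparatively routine via the standard slicing and degree-repair techniques.
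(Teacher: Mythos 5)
There is a genuine gap, and it sits exactly at the one step that carries the real content of the lemma. First, note that the paper does not prove Lemma~\ref{lem:forG} at all: it is imported verbatim from \cite{BoeSchTar09} (Lemma~6 there, with the footnote observing that the upper bound in \ref{lem:forG:R3} is implicit in the proof of Lemma~7 of that paper). So your attempt is in effect a reconstruction of the proof in \cite{BoeSchTar09}. Your outer scaffolding --- degree form of the Regularity Lemma with $\eps'\ll\eps$, Hajnal--Szemer\'edi on the reduced graph $R$ to get the $K_r$-factor and hence $K_k^r$, the left-to-right relabelling that pushes the missing matching onto the diagonal, and the final balancing plus super-regularity repair yielding \ref{lem:forG:G1}--\ref{lem:forG:G3} for every admissible $(n_{i,j})$ --- is all standard and consistent with how the original proof is organised. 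The failure is in the chain construction. You claim that the surplus $\gamma/4$ in $\delta(R)\ge(\tfrac{r-1}{r}+\tfrac{3\gamma}{4})K$ forces almost every pair of factor cliques to be joined in the auxiliary graph (adjacency meaning: the non-edges between the two cliques form a sub-matching of a perfect matching). This is false for an arbitrary Hajnal--Szemer\'edi factor. A pair $(C_i,C_j)$ is already spoiled by a single \emph{cherry}, i.e.\ two missing edges sharing an endpoint, which costs only $2$ units of degree deficit on each side. Since each vertex of $R$ is allowed as many as $(\tfrac1r-\tfrac{3\gamma}{4})K\approx k$ non-neighbours, one can start from the complete graph on $[k]\times[r]$, keep each $C_i=\{i\}\times[r]$ complete, and delete one cherry between essentially every pair of cliques, distributing the cherry centres and leaves evenly: the deficit per vertex is only $O(k/r)$, well within the allowance, yet \emph{no} pair of cliques is adjacent in your auxiliary graph (for $r\ge 3$ one can spoil all pairs simultaneously; for $r=2$ all but a $2\gamma$-fraction at each clique). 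So the auxiliary graph can be empty, or have minimum degree $o(k)$, while $\delta(R)\ge(\tfrac{r-1}{r}+\gamma)K$ holds, and Dirac's theorem is not applicable. Even setting the counterexample aside, your inference is a non sequitur: ``almost every pair is joined'' bounds the \emph{average} degree of the auxiliary graph, whereas Dirac's condition requires \emph{minimum} degree at least $k/2$.

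The conclusion to draw is that the backbone $B_k^r$ cannot be obtained by first fixing an arbitrary $K_r$-factor and then ordering it; the factor and the ordering have to be constructed together, with the possibility of exchanging vertices between cliques, and this is precisely why the corresponding argument in \cite{BoeSchTar09} is substantially more involved than ``Hajnal--Szemer\'edi plus Dirac''. (One sanity check that some real work is unavoidable: $B_k^r$ is not contained in the $(r-1)$-st power of a Hamilton cycle, so the Koml\'os--S\'ark\"ozy--Szemer\'edi theorem on powers of Hamilton cycles cannot be invoked as a shortcut either --- between consecutive blocks that power supplies only $\binom{r}{2}$ of the required $r(r-1)$ edges.) Your closing paragraph correctly identifies this stage as the main obstacle, but the mechanism you propose to overcome it does not work as stated; the remaining steps (slicing, redistribution of $O(\xi_0 n+\eps n)$ vertices, degree repair inside columns) are indeed routine once \ref{lem:forG:R1} and \ref{lem:forG:R2} are in hand.
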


The remainder of this section is dedicated to a corresponding partitioning
lemma for~$H$, which again will be similar to the Lemma for~$H$
in~\cite{BoeSchTar09} (Lemma~8 in that paper). However, we need to strengthen
the conclusion of this lemma. We shall point out the main differences below.

Again, we start with some definitions.  Let $H$ be a graph on $n$ vertices
and $\sigma: V(H)\to \{0,\dots,r\}$ be a proper $(r+1)$-colouring of $H$. A
set $W\subseteq V(H)$ is called \emph{zero free} if $\sigma^{-1}(0)\cap
W=\emptyset$. Now assume that the vertices of $H$ are labelled $1,\dots,n$
and that this labelling is a labelling of bandwidth at most $\beta n$ for
some $\beta>0$. Given an integer $\ell$, an $(r+1)$-colouring $\sigma:
V(H)\to \{0,\dots,r\}$ of $H$ is said to be $(\ell,\beta)$-\emph{zero free}
with respect to such a labelling if any $\ell$ consecutive blocks contain
at most one block with zeros. Here a block is a set of the form
$B_t\deff\{(t-1)4r\beta n+1,\dots, t4r\beta n\}$, $t=1,\dots, 1/(4r\beta)$.

\begin{lem}[Lemma for $H$] \label{lem:forH}
Let $r,k\ge 1$ be integers and let $\beta,\xi>0$ satisfy $\beta\le \xi^2/(1200r)$.
Let $H$ be a graph on $n$ vertices and assume that $H$ has a labelling of bandwidth at most $\beta n$ and an $(r+1)$-colouring that is $(10/\xi,\beta)$-zero free with respect to this labelling. Let $R_k^r$ be a graph with $V(R_k^r)=[k]\times[r]$ such that 
\begin{enumerate}[label=\itmit{R\arabic{*}$^*$\hspace{-.1em}}]
\item\label{lem:forH:R1} $K_k^r\subseteq B_k^r\subseteq R_k^r$, and
\item\label{lem:forH:R2} for every $i\in[k]$ there is a vertex $s_i \in ([k]\setminus \{i\}) \times [r]$ with $\{s_i, (i,j)\}\in E(R_k^r)$ for every $j \in [r]$.
\end{enumerate}
Furthermore, suppose $(m_{i,j})_{i\in[k],j\in[r]}$ is an $r$-equitable integer partition of $n$ with $m_{i,j}\ge 12\beta n$ for every $i\in[k]$ and $j\in[r]$. Then there exists a mapping $f:V(H)\to [k]\times[r]$ and a set of special vertices $X\subseteq V(H)$ with the following properties, where we set $W_{i,j}\deff f^{-1}(i,j)$.
\begin{enumerate}[label=\itmit{H\arabic{*}}]
\item\label{lem:forH:H1} $|X\cap W_{i,j}|\le \xi n$ and $|N_H(X\cap W_{i,j})\cap W_{i',j'}|\le \xi n$ for all $i,i'\in[k]$, $j,j'\in[r]$,
\item\label{lem:forH:H2} $m_{i,j}-\xi n \le |W_{i,j}|\le m_{i,j}+\xi n$ for every $i\in[k]$ and $j\in[r]$,
\item\label{lem:forH:H3} for every edge $\{u,v\}\in E(H)$ we have $\{f(u),f(v)\}\in E(R_k^r)$, and
\item\label{lem:forH:H4} if $\{u,v\}\in E(H)\setminus E(H[X])$ then $\{f(u),f(v)\}\in E(K_k^r)$.
\end{enumerate}
\end{lem}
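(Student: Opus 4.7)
I adapt the approach of~\cite[Lemma~8]{BoeSchTar09}, enhancing it to simultaneously produce the small exceptional set~$X$ required by~\ref{lem:forH:H4}. Partition $V(H)$ into consecutive blocks $B_1,\dots,B_T$ of width $4r\beta n$ along the bandwidth labelling, so $T=1/(4r\beta)$. Call $B_t$ \emph{special} if it contains a colour-$0$ vertex; by the $(10/\xi,\beta)$-zero-free condition, at most a $\xi/10$-fraction of blocks are special and any two special blocks are separated by at least $10/\xi-1$ non-special ones. Since bandwidth $\beta n$ is smaller than a block, all $H$-neighbours of a colour-$0$ vertex in a special $B_t$ lie in the \emph{dirty window} $B_{t-1}\cup B_t\cup B_{t+1}$, and by the separation these dirty windows are pairwise disjoint.

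Next, construct a walk $i_1,\dots,i_T$ in $[k]$ with $|i_t-i_{t+1}|\le 1$, held constant on every dirty window (so $i_{t-1}=i_t=i_{t+1}$ whenever $B_t$ is special) and changing at most $k-1$ times in the clean regions, so that each column $i$ is visited approximately $(4r\beta n)^{-1}\sum_{j\in[r]} m_{i,j}$ times. Define $f\colon V(H)\to [k]\times[r]$ by $f(v)=(i_t,\sigma(v))$ whenever $v\in B_t$ has colour in $[r]$, and $f(v)=s_{i_t}$ whenever $\sigma(v)=0$, using~\ref{lem:forH:R2}. Let $X$ be the union of all dirty windows together with a $\beta n$-wide buffer on either side of each direction-change in the clean regions.

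Property~\ref{lem:forH:H3} follows because every $H$-neighbour of a colour-$0$ vertex lands in column $i_t$ (by constancy of the walk on its dirty window), which $s_{i_t}$ dominates by~\ref{lem:forH:R2}; all remaining edges either stay within a single block (distinct colours, same column, image in $E(K_k^r)$) or cross a block boundary with $|i_t-i_{t+1}|\le 1$ and distinct colours (image in $E(B_k^r)$), both of which lie in $E(R_k^r)$. Property~\ref{lem:forH:H4} holds since any edge $\{u,v\}$ with $u\notin X$ has both endpoints in a region where the walk is constant and no colour-$0$ vertex appears, placing its image in $E(K_k^r)$. For~\ref{lem:forH:H1}, the estimate $|X|\le 3\xi n/10 + 2(k-1)\beta n$ combined with the fact that $m_{i,j}\ge 12\beta n$ forces $k\beta\le 1/(12r)$, keeping $|X|$ well below $\xi n$; since $X$ is a union of $O(\xi/(r\beta))$ intervals of width $O(r\beta n)$ in the bandwidth labelling, the bandwidth bound gives $|N_H(X\cap W_{i,j})\cap W_{i',j'}|\le\xi n$. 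Finally,~\ref{lem:forH:H2} follows from the walk's visit-frequencies combined with the $r$-equitability of $(m_{i,j})$, with the $\xi n$-slack absorbing discrepancies coming from uneven colour distribution within blocks.

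The principal difficulty lies in engineering the walk: direction-changes in the clean regions consume part of the $X$-budget, so one must verify that the $\Theta(\xi/(r\beta))$ change-points available in the clean regions suffice to realise arbitrary column-visit-frequencies across $k$ columns, and simultaneously absorb small per-column colour imbalances into the $\xi n$-slack of~\ref{lem:forH:H2}. The hypothesis $\beta\le\xi^2/(1200r)$ ensures $\xi/(r\beta)\gg k$, so greedy scheduling of the walk comfortably achieves both objectives.
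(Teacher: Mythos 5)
Your construction follows the same skeleton as the paper's proof: blocks of width $4r\beta n$ along the bandwidth ordering, a column assignment in $[k]$ that is constant near colour-$0$ vertices, colour-$0$ vertices mapped to $s_i$ via~\ref{lem:forH:R2}, and $X$ consisting of the zero-neighbourhoods plus boundary buffers -- and your verification of~\ref{lem:forH:H1}, \ref{lem:forH:H3} and~\ref{lem:forH:H4} is essentially the paper's. But there is a genuine gap at~\ref{lem:forH:H2}. You keep the \emph{original} colouring $\sigma$ and set $f(v)=(i_t,\sigma(v))$, claiming that the ``uneven colour distribution within blocks'' is a small discrepancy absorbed by the $\xi n$ slack. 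The hypotheses give no control whatsoever over how $\sigma$ distributes the colours $1,\dots,r$ along the ordering: zero-freeness constrains only colour $0$. Concretely, let $r=2$ and let $H$ be a disjoint union of stars $K_{1,9}$ listed consecutively, centres coloured $1$ and leaves coloured $2$; this has bandwidth at most $9\le\beta n$ and the colouring is zero free everywhere, hence $(10/\xi,\beta)$-zero free. Every block is (up to boundary effects) one tenth colour $1$ and nine tenths colour $2$, so \emph{every} union of blocks inherits the same $1\!:\!9$ skew. Taking $k=2$, $\xi=1/100$ and the $r$-equitable partition $m_{i,j}=n/4$, any colour-respecting assignment gives $|W_{i,1}|\approx\big(|W_{i,1}|+|W_{i,2}|\big)/10\approx n/20$, while~\ref{lem:forH:H2} demands $|W_{i,1}|\ge n/4-\xi n$ -- a failure of order $n$. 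Note that no choice of walk, monotone or not, can repair this, since the colour coordinate is pinned to $\sigma(v)$; so the ``principal difficulty'' is not engineering the walk (with at most $k-1$ changes your walk is in any case forced to be the monotone sweep $1,\dots,k$, and placing the cut points is the easy part), but something your proposal omits entirely.

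What is missing is precisely the paper's first and central step: a \emph{recolouring} of $H$ before any cutting is done. By Proposition~\ref{prop:Switching} one can, inside a zero-free window, swap two colour classes from that point onward at the cost of recolouring $O(\beta n)$ vertices with colour $0$; iterating this along designated switching blocks (Claim~\ref{cl:recolour}, Proposition~\ref{prop:EvenOut}) yields Proposition~\ref{prop:Balanced}: a new proper colouring that is still $(\ell,\beta)$-zero free and moreover $6\xi' n$-\emph{balanced}, i.e.\ each colour class occupies a $1/r$ fraction of every interval up to $O(\xi' n)$. Only for such a balanced colouring does the monotone cut-point construction deliver~\ref{lem:forH:H2}; this is also exactly why the hypothesis is the strengthened $(10/\xi,\beta)$-zero-freeness (it provides the budget of zero-free windows in which switching blocks can be inserted) and why the extra colour $0$, absorbed via~\ref{lem:forH:R2}, is needed at all. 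Separately, your bound for~\ref{lem:forH:H1} is accounted globally as $|X|\le 3\xi n/10+2(k-1)\beta n$, and $2(k-1)\beta n\le n/(6r)$ need not be below $\xi n$; the correct accounting is per cluster, where each $W_{i,j}$ meets at most $O(\beta n)$ boundary vertices. That slip is repairable; the missing balancing step is not, as the star example shows that any map of the form $f(v)=(i(v),\sigma(v))$ with the original $\sigma$ must violate~\ref{lem:forH:H2}.
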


This lemma differs from Lemma~8 in~\cite{BoeSchTar09} in that the
conclusion~\ref{lem:forH:H4} is stronger. In order to
obtain this stronger conclusion we had to strengthen the notion of
zero-freeness as well.
Nevertheless the proof of this modified Lemma for~$H$ closely follows
the proof in~\cite{BoeSchTar09}. We use the following propositions.

\begin{prop}[Proposition 20 in~\cite{BoeSchTar09}] \label{prop:EvenOut}
Let $c_1,\dots,c_r$ be such that $c_1\le c_2\le\dots\le c_{r-1}\le c_r\le c_1+x$ and $c_1',\dots,c_r'$ be such that $c_r'\le c_{r-1}'\le\dots\le c_2'\le c_1'\le c_r'+x$. If we set $c_i''\deff c_i+c_i'$ for all $i\in[r]$ then
\[
 \max_i\{c_i''\} \le \min_i\{c_i''\}+x\,.
\]
\end{prop}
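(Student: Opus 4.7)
The plan is to prove Proposition~\ref{prop:EvenOut} by a short case analysis based on the relative order of the indices achieving the maximum and the minimum of the combined sequence $(c_i'')_{i\in[r]}$.

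First I would fix indices $a, b \in [r]$ with $c_a'' = \max_i c_i''$ and $c_b'' = \min_i c_i''$, and write
\[
  c_a'' - c_b'' = (c_a - c_b) + (c_a' - c_b')\,.
\]
The goal is then to bound this difference by $x$. The key observation is that the two hypotheses force the sequences $(c_i)$ and $(c_i')$ to be monotone in \emph{opposite} directions: $(c_i)$ is non-decreasing while $(c_i')$ is non-increasing, and each sequence has overall range at most $x$ (i.e.\ $c_r - c_1 \le x$ and $c_1' - c_r' \le x$).

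Next I would split into the two possible cases. If $a \le b$, then monotonicity gives $c_a - c_b \le 0$ and $c_a' - c_b' \ge 0$; moreover, since $(c_i')$ is non-increasing with total range at most $x$, we have $c_a' - c_b' \le c_1' - c_r' \le x$. Hence $c_a'' - c_b'' \le 0 + x = x$. Symmetrically, if $a \ge b$, then $c_a' - c_b' \le 0$ and $c_a - c_b \ge 0$, with $c_a - c_b \le c_r - c_1 \le x$, giving again $c_a'' - c_b'' \le x$.

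There is essentially no obstacle here; the proposition is a purely elementary observation, and the only thing to keep track of is that the two ranges $c_r - c_1$ and $c_1' - c_r'$ are each bounded by $x$ so that only one of them enters the estimate in either case. No machinery from regularity, arrangeability, or the preceding partition lemmas is needed.
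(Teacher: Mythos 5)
Your proof is correct: the two-case analysis on the relative position of the maximising and minimising indices, using that $(c_i)$ is non-decreasing and $(c_i')$ non-increasing with each range bounded by $x$, establishes the claim completely, including the boundary case $a=b$. Note that the paper under review does not reproduce a proof of this proposition at all --- it is imported verbatim as Proposition~20 from~\cite{BoeSchTar09} --- and your argument is precisely the natural elementary one given there, so there is nothing to flag.
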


\begin{prop}[Proposition 22 in~\cite{BoeSchTar09}] \label{prop:Switching}
Assume that the vertices of $H$ are labelled $1,\dots,n$ with bandwidth at most $\beta n$ with respect to this labelling.  Let $s\in [n]$ and suppose further that $\sigma\colon\,[n] \to \{0,\dots,r\}$ is a proper $(r+1)$-colouring of~$V(H)$ such that $[s-2\beta n,s+2\beta n]$ is zero free.

Then for any two colours $l,l'\in [r]$ the mapping $\sigma'\colon\, [n]\to \{0,\dots,r\}$ defined by
\[
  \sigma'(v):= \begin{cases}
    l  & \text{ if } \sigma(v)=l', s<v \\
    l' & \text{ if } \sigma(v)=l,  s+\beta n<v \\
    0  & \text{ if } \sigma(v)=l, s-\beta n\le v\le s+\beta n \\
    \sigma(v) & \text{ otherwise }
  \end{cases}
\]
is a \emph{proper} $(r+1)$-colouring of $H$.
\end{prop}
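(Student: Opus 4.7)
My plan is to verify properness of $\sigma'$ by a direct case analysis on the common colour $c$ of a hypothetical monochromatic edge $\{u,v\} \in E(H)$, assuming without loss of generality $u < v$ so that $v - u \le \beta n$ by the bandwidth bound. Since $\sigma'$ only ever changes preimages of $l$ and $l'$ (and promotes some of these to the colour $0$), the case $c \notin \{0,l,l'\}$ is immediate: both $u$ and $v$ fall in the ``otherwise'' branch of the definition, forcing $\sigma(u) = c = \sigma(v)$ and contradicting the properness of $\sigma$.

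For $c = l$ I would unpack the definition to see that $\sigma'(w) = l$ iff either (i) $\sigma(w) = l'$ with $w > s$, or (ii) $\sigma(w) = l$ with $w < s - \beta n$. If both endpoints fall in the same branch then $\sigma(u) = \sigma(v)$, contradicting properness of $\sigma$; if they fall in different branches, one lies in $(-\infty, s - \beta n)$ and the other in $(s,\infty)$, which forces $|u-v| > \beta n$ and contradicts the bandwidth hypothesis. The case $c = l'$ is entirely symmetric, with the relevant branches being $\sigma(w) = l'$ and $w \le s$, respectively $\sigma(w) = l$ and $w > s + \beta n$.

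The last case $c = 0$ is where the zero-free interval $[s - 2\beta n,\, s + 2\beta n]$ is used. A vertex $w$ has $\sigma'(w) = 0$ either because $\sigma(w) = 0$ already, or because $\sigma(w) = l$ with $w \in [s - \beta n,\, s + \beta n]$. The sub-case in which both endpoints already satisfy $\sigma = 0$ contradicts properness of $\sigma$ directly. If instead at least one of $u,v$ (say $v$) falls in the recolouring window $[s - \beta n,\, s + \beta n]$, then $v - u \le \beta n$ forces the other vertex into $[s - 2\beta n,\, s + 2\beta n]$; zero-freeness rules out $\sigma(u) = 0$ there, so $\sigma(u) = l = \sigma(v)$, which once more contradicts properness of $\sigma$. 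The only genuine subtlety I would double-check is precisely this propagation step across a bandwidth-length edge, and it is exactly why the hypothesis insists on a zero-free window of radius $2\beta n$ rather than just $\beta n$. Finally, since $l, l' \in [r]$, the range of $\sigma'$ is still $\{0,\dots,r\}$, which completes the verification.
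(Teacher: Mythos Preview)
The paper does not actually prove this proposition; it is quoted verbatim from~\cite{BoeSchTar09} (as Proposition~22 there) and used as a black box in the proof of Proposition~\ref{prop:Balanced}. So there is no ``paper's own proof'' to compare against.

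That said, your argument is correct. The case split on the common colour $c\in\{0,l,l'\}\cup([r]\setminus\{l,l'\})$ is the natural approach, and you handle each branch cleanly. Your identification of the preimages
\[
(\sigma')^{-1}(l)=\{w:\sigma(w)=l',\,w>s\}\cup\{w:\sigma(w)=l,\,w<s-\beta n\}
\]
and analogously for $l'$ and $0$ is accurate (implicitly assuming $l\neq l'$, which is the only interesting situation). In the $c\in\{l,l'\}$ cases the contradiction comes from either properness of $\sigma$ or the bandwidth gap of size exceeding $\beta n$ between the two branches; in the $c=0$ case the zero-free window of radius $2\beta n$ is exactly what is needed to rule out the mixed sub-case, as you observe. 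Nothing is missing.
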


By repeatedly applying Proposition~\ref{prop:Switching} we can
transform a colouring of~$H$ into a balanced colouring by allowing some more vertices to be coloured with colour~$0$. This is a first step towards the proof
of Lemma~\ref{lem:forH}.

In order to make this precise we need the following definition. For
$x\in\N$, a colouring $\sigma:[n]\to\{0,\dots,r\}$ is called
$x$-\emph{balanced}, if for each pair $a,b\in [n]\cup \{0\}$ and each
$i\in[r]$, we have
\[ \frac{b-a}r -x \le \left| \sigma^{-1}(i)\cap \{a+1,\dots,b\}\right| \le \frac{b-a}r +x \]
and $|\sigma^{-1}(0)|\le x$.

\begin{prop} \label{prop:Balanced}
Assume that the vertices of $H$ are labelled $1,\dots,n$ with bandwidth at most $\beta n$ and that $H$ has an $(r+1)$-colouring that is $(2\ell,\beta)$-zero free with respect to this labelling. Let $\xi=1/\ell$ and $\beta\le \xi^2/(12r)$. Then there exists a proper $(r+1)$-colouring $\sigma:V(H)\to \{0,\dots,r\}$ that is $(\ell,\beta)$-zero free and $6\xi n$-balanced.
\end{prop}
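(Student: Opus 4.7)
The plan is to produce $\sigma$ from the given colouring by iteratively applying Proposition~\ref{prop:Switching} at a carefully chosen sequence of positions, each lying in a zero-free neighbourhood.

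For each colour $l\in[r]$ and position $t\in\{0,1,\dots,n\}$ set $d_l(t):=|\sigma^{-1}(l)\cap[1,t]|-t/r$, the cumulative imbalance of colour~$l$ up to~$t$. Since
\[
|\sigma^{-1}(l)\cap\{a+1,\dots,b\}|-\tfrac{b-a}{r}=d_l(b)-d_l(a),
\]
producing a $6\xi n$-balanced colouring reduces to the oscillation bound $\max_t d_l(t)-\min_t d_l(t)\le 6\xi n$ for each~$l$, together with $|\sigma^{-1}(0)|\le 6\xi n$.

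The initial zero count is under control: by $(2\ell,\beta)$-zero freeness at most a $\tfrac1{2\ell}$-fraction of the $L=1/(4r\beta)$ blocks (each of width $4r\beta n$) can be zero blocks, so the given colouring has at most $\tfrac{\xi}{2}n$ zero vertices. Using $\beta\le\xi^2/(12r)$, any window of $\xi n$ vertices spans at least $3\ell$ blocks, only a constant number of which can be zero blocks, leaving ample room to place switching positions in zero-free neighbourhoods of width $4\beta n$.

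I would then process positions $s_1<\dots<s_{\ell-1}$ with $s_a\approx a\xi n$ from left to right, and at each $s_a$ apply at most $r-1$ switches from Proposition~\ref{prop:Switching} to maintain the invariant $\max_{l,l'}|d_l(s_a)-d_{l'}(s_a)|\le 2\xi n$. The key identity is that a swap of colours $l,l'$ at $s_a$ leaves $d_l(t),d_{l'}(t)$ unchanged for $t\le s_a-\beta n$ and, for $t>s_a+\beta n$, satisfies
\[
d_l^{\mathrm{new}}(t)-d_l(s_a)=d_{l'}^{\mathrm{old}}(t)-d_{l'}^{\mathrm{old}}(s_a)+O(\beta n/r),
\]
and symmetrically for $d_{l'}$. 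Thus the swap exchanges the future trajectories of $l$ and $l'$ beyond the buffer zone. By swapping at $s_a$ the colour with the largest anticipated drift against the one with the smallest, and applying Proposition~\ref{prop:EvenOut} to track the combined effect, one shows that $r-1$ extremal swaps suffice to restore the invariant at each boundary.

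The boundary invariant then propagates to all $t$ because the free drift of $d_l$ over any interval of length $\xi n$ is at most $\xi n$, which combined with $\sum_l d_l(s_a)\approx 0$ yields $|d_l(t)|\le 3\xi n$ and hence oscillation at most $6\xi n$. Finally, the number of new zero vertices introduced by all switches is at most $(\ell-1)(r-1)\cdot 2\beta n\le \tfrac{\xi}{6}n$ by $\beta\le\xi^2/(12r)$, giving $|\sigma^{-1}(0)|\le 6\xi n$; and spacing the switches at least $\ell$ blocks apart guarantees that the final colouring is $(\ell,\beta)$-zero free.

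The main obstacle will be the bookkeeping in the core step: a swap at $s_a$ does not directly change $d_l(s_a)$ or $d_{l'}(s_a)$, only their extrapolation beyond $s_a$, so ``rebalancing at $s_a$'' really means anticipating drift and reversing it through the identity above, with Proposition~\ref{prop:EvenOut} as the combinatorial engine. Controlling the accumulated $O(\beta n/r)$ correction from the buffer zeros across the many switches is where the quantitative assumption $\beta\le\xi^2/(12r)$ is used.
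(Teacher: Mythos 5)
Your strategy is essentially the paper's own proof in a different notation: switching positions spaced roughly $\xi n$ apart in zero-free regions, colour permutations realised as transpositions via Proposition~\ref{prop:Switching}, the permutation at each position chosen to anti-sort the current prefix counts against the anticipated increments of the next segment, with Proposition~\ref{prop:EvenOut} as the combinatorial engine, followed by propagation of the boundary invariant to arbitrary intervals and a count of the zeros. Your discrepancy functions $d_l(t)$ are a reformulation of the paper's prefix counts $c_i(t)$, and your closing remark that a swap at $s_a$ only exchanges \emph{future} trajectories is precisely the content of the paper's inductive step, so this is not a different route. However, there are two concrete slips in the zero-freeness bookkeeping.

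First, you cannot apply your $r-1$ transpositions ``at $s_a$'' inside a single zero-free neighbourhood of width $4\beta n$: after the first application of Proposition~\ref{prop:Switching}, the window $[s_a-\beta n,s_a+\beta n]$ contains newly created zeros, so the hypothesis of that proposition (zero-freeness of $[s-2\beta n,s+2\beta n]$) fails for the next transposition at the same point. The paper's Claim~\ref{cl:recolour} handles this by reserving an entire zero-free block of width $4r\beta n$ and performing each transposition in its own sub-interval of length $4\beta n$; your window is a factor $r$ too small as stated. Second, ``spacing the switches at least $\ell$ blocks apart'' does \emph{not} yield $(\ell,\beta)$-zero-freeness of the output colouring: colour $0$ is fixed by every switch, so the original zero blocks persist, and a switching position placed within $\ell$ blocks of an original zero block creates two zero blocks among $\ell$ consecutive ones. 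What you need is that every $\ell$ consecutive blocks contain at most one block that is \emph{either} an original zero block \emph{or} a switching block; the $(2\ell,\beta)$-zero-freeness hypothesis --- which your sketch invokes only to count the initial zeros --- is exactly what makes such a placement possible while still keeping consecutive switching blocks at most $3\ell$ apart, so that the segments between them retain length at most $\xi n$ (which your invariant step needs). Both slips are repairable with the paper's bookkeeping, so I would describe your proposal as the correct proof with these two quantitative details to patch.
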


\begin{proof}
The idea of the proof is to split $H$ into small parts and use Proposition~\ref{prop:Switching} to switch colours in the parts. This allows us to even out differences in the sizes of the colour classes and obtain a balanced colouring.

Recall that the blocks $B_1,\dots,B_{1/4r\beta}$ of $H$ are the vertex sets of the form $B_t=\{(t-1)4r\beta n+1,\dots,t4r\beta n\}$. 

We start by identifying so called \emph{switching blocks}. They will be used to exchange the colours between parts of $H$. With the help of Proposition~\ref{prop:Switching}, which will colour some vertices with 0, we choose the switching blocks in such a way that every $\ell$ consecutive blocks contain at most one block which either had zeros in the original colouring or one switching block. As the ordering of $H$ is $(2\ell,\beta)$-zero free this can be done so that every consecutive $3\ell$ blocks contain at least one switching block.
We next explain how to use the switching blocks.

\begin{claim} \label{cl:recolour}
Let $\sigma:[n]\to \{0,\dots,r\}$ be a proper $(r+1)$-colouring of $H$, $B_t$ a zero free block and $\pi$ any permutation of $[r]$. Then there exists a proper $(r+1)$-colouring $\sigma'$ of $H$ with $\sigma'(v) = \sigma(v)$ for all $v\in \bigcup_{i<t}B_i$ and $\sigma'(v) = \pi(\sigma(v))$ for all $v\in \bigcup_{i>t}B_i$.
\end{claim}

Indeed, every permutation $[r]$ is the concatenation of at most $r$ transpositions, i.e., permutations that exchange only two elements. We split the block $B_t$ into $r$ disjoint intervals of length $4\beta n$ and decompose $\pi$ into at most $r$ transpositions. The claim then follows from Proposition~\ref{prop:Switching}.

\medskip

Let $\{s_1,s_2,\dots,s_p\}$ be the set of indices belonging to switching blocks. For ease of notation let $s_0=0$ and let $s_{p+1}=1/(4r\beta)+1$. Further let $B^*(t)\deff \bigcup_{i\le t} \left( \bigcup_{s_{i-1}\le j<s_i} B_j \right)$, $c_i(t) \deff |\{ v \in B^*(t) : \sigma(v)=i\}|$ and $\tilde{c}_i(t) \deff |\{ v \in B^*(t+1)\setminus B^*(t) : \sigma(v)=i\}|$ for $t\in[p]$.
We inductively construct a proper $(r+1)$-colouring of $H$ with
\begin{align} \label{eq:H:balanced:1}
   \max_{i} \{c_i(t)\} \le \min_{i} \{c_i(t)\} + \xi n
\end{align}
for every $t\in[p+1]$. 

Note that any proper colouring of $H$ satisfies~\eqref{eq:H:balanced:1} for $t=1$ as $|B^*(1)|\le 3\ell 4r\beta n \le \xi n$ because $s_1\le 3\ell$. So let $\sigma$ be a proper $(r+1)$-colouring which satisfies~\eqref{eq:H:balanced:1} for all $t'\le t$. Without loss of generality we assume that $c_1(t)\le c_2(t)\le\dots\le c_r(t)\le c_1(t)+\xi n$. We define the switching for block $t$ to be any permutation $\pi$ which satisfies $\tilde{c}_{\pi(r)}(t)+\xi n\ge \tilde{c}_{\pi(1)}(t)\ge \tilde{c}_{\pi(2)}(t)\ge\dots \ge \tilde{c}_{\pi(r-1)}(t)\ge \tilde{c}_{\pi(r)}(t)$. Such a permutation exists as $|B^*(t+1)\setminus B^*(t)|\le \xi n$. We apply Claim~\ref{cl:recolour} to $\sigma$, the block $B_t$ and the permutation $\pi$ and obtain a new proper $(r+1)$-colouring $\sigma'$. Let $c_i'(t) \deff |\{ v \in B^*(t) : \sigma'(v)=i\}|$. It follows from Proposition~\ref{prop:EvenOut} that $c_i'(t+1) = c_i(t) + \tilde{c}_{\pi(i)}(t)$ satisfies
\begin{align} \label{eq:H:balanced:2} 
  \max_{i} \{c_i'(t+1)\} \le \min_{i} \{c_i'(t+1)\} + \xi n\,.
\end{align}
Therefore, the colouring $\sigma'$ satisfies~\eqref{eq:H:balanced:1} for every $t'\le t+1$. Let $\sigma^*$ be a colouring of $H$ which satisfies~\eqref{eq:H:balanced:1} for every $t\le p+1$. Then $\sigma^*$ is a proper $(r+1)$-colouring and $(\ell,\beta)$-zero free by construction. It remains to show that $\sigma^*$ is also $6\xi n$-balanced.

For this purpose consider any interval $[a,b]\deff \{a,a+1,\dots,b\}\subseteq [n]$ and let $a'=s_t$ for the smallest switching block index $s_t\ge a$. Similarly let $b'$ be the biggest $s_{t'}$ with $s_{t'}\le b$. Fix a colour $i\in[r]$ and let $C_i\deff (\sigma^*)^{-1}(i)$. Clearly $C_i\cap [a,a']$ and $C_i\cap [b',b]$ are of size $\xi n$ at most. Moreover, if $z$ denotes the number of vertices $x$ in $[a',b']$ with $\sigma^*(x)=0$, then 
\[
 \left|C_i\cap [a',b']\right| = \left|C_i\cap [b']\right| -  \left|C_i\cap [a']\right| \eqByRef{eq:H:balanced:2} \frac{b'-a'-z}r \pm 2\xi n\,.
\]
Because $\sigma^*$ is $(\ell,\beta)$-zero free we have $z\le \xi n$. Hence $\big((b-a)-(b'-a')-z\big)/r\le 2\xi n$ implies 
\[
  \left|C_i\cap [a,b] \right| \le \left|C_i\cap [a',b'] \right| \pm 2\xi n = \frac{b'-a'-z}r \pm 4\xi n = \frac{b-a}r \pm 6\xi n\,.
\]
\end{proof}

With the help of Proposition~\ref{prop:Balanced} and an appropriate method
for ``cutting up'' a graph~$H$ with a balanced colouring we can now construct
the homomorphism asserted by Lemma~\ref{lem:forH}.

\begin{proof}[of Lemma~\ref{lem:forH}]
  Given $r,k$ and $\beta$, let $\xi$, $H$ and $R_k^r\!\supseteq\!B_k^r\!
  \supseteq\!K_k^r$ be as required.  Assume without loss of generality that
  the vertices of $R_k^r$ are labelled as induced by this copy of $B_k^r$.
  Assume moreover that the vertices of $H$ are labelled $1,\dots,n$ with
  bandwidth at most $\beta n$ and that $H$ has a $(10/\xi,\beta)$-zero free
  $(r+1)$-colouring with respect to this labelling. Let
  $B_1,\dots,B_{1/(4r\beta)}$ be the corresponding blocks of $H$.  Set
  $\xi'=\xi/10$ and note that $\beta\le \xi^2/(1200r) =
  (\xi')^2/(12r)$. Therefore, by Proposition~\ref{prop:Balanced} with input
  $\beta$, $\ell=1/\xi'$, and $H$, there is an $(\ell,\beta)$-zero free and
  $6\xi'n$-balanced colouring $\sigma:V(H)\to \{0,\dots,r\}$ of~$H$.

Given an $r$-equitable integer partition $(m_{i,j})_{i\in[k],j\in[r]}$ of $n$, set $M_i\deff \sum_{j\in[r]}m_{i,j}$ for $i\in[k]$. Now choose indices $0=t_0\le t_1\le \dots \le t_{k-1}\le t_k=1/(4r\beta)$ such that $B_{t_i}$ and $B_{t_i+1}$ are zero free blocks and 
\begin{align} \label{eq:H:cut}
   \sum_{i'\le t_i} |B_{i'}| \le \sum_{i'\le i} M_{i'} < 12r\beta n+\sum_{i'\le t_i} |B_{i'}|\;.
\end{align}
Indeed, such $t_i$ exist as $\sigma$ is $(\ell,\beta)$-zero free and, in particular, two out of every three consecutive blocks are zero free. Furthermore, the $t_i$ are distinct because $m_{i,j}\ge 12 \beta n$. The last $\beta n$ vertices of the blocks $B_{t_i}$ and the first $\beta n$ vertices of the blocks $B_{t_i+1}$ will be called \emph{boundary vertices} of $H$. Observe that the choice of the $t_i$ implies that boundary vertices are never assigned colour $0$ by $\sigma$.

Using $\sigma$, we will now construct $f:V(H)\to [k]\times [r]$ and $X\subseteq V(H)$. For each $i\in[k]$, and each $v\in \bigcup_{t_{i-1}< i'\le t_i}B_{i'}$ we set
\[
    f(v):= \begin{cases}
      s_i & \text{ if $\sigma(v)=0$, } \\
      (i,\sigma(v)) & \text{ otherwise, }
    \end{cases}
\]
where $s_i$ is the vertex which exists by property~\ref{lem:forH:R2}. Further let
\begin{align*}
    X_1&\deff \bigcup_{v\in \sigma^{-1}(0)}\big(\{v\}\cup N_H(v)\big),\\
    X_2&\deff \big\{v\in V(H)\,:\,v\,\text{ is a boundary vertex}\big\}.
\end{align*}
It remains to show that $f$ and $X\deff X_1\cup X_2$ satisfy properties~\ref{lem:forH:H1}--\ref{lem:forH:H4} of Lemma~\ref{lem:forH}.

Recall that there are $1/(4r\beta)$ many blocks in the $(\ell,\beta)$-zero free colouring $\sigma$. The bandwidth-ordering implies that all vertices from $X_1\cup N(X_1)$ lie in blocks that either contain zeros or that are adjacent to blocks that contain zeros (because $|B_|\ge 4r\beta n$). Hence, at most $(3/\ell)/(4r\beta)+3$ out of $1/(4r\beta)$ blocks contain vertices from $X_1\cup N(X_1)$. 
Furthermore, every $W_{i,j}=f^{-1}(i,j)$ contains at most $\beta n$ boundary vertices and at most $\beta n$ vertices adjacent to boundary vertices. Thus 
\begin{align*}
  |X \cap W_{i,j}| &\le  |X_1| + |X_2 \cap W_{i,j}| \le \left(\frac{3(1/\ell)}{4r\beta}+3\right)4r\beta\, n  + \beta n \\
&\le \frac{4}{4r\ell\beta}4r\beta n + \beta n = \frac{4}{10}\xi n + \beta n\le \xi n\,,\\
\intertext{and}
  |N(X) \cap W_{i,j}| &\le  |N(X_1)| + |N(X_2) \cap W_{i,j}| \le \left(\frac{3(1/\ell)}{4r\beta}+3\right)4r\beta\, n + \beta n \le \xi n
\end{align*}
and property~\ref{lem:forH:H1} holds.

It follows from~\eqref{eq:H:cut} that $M_i - 12r\beta n \le |\bigcup_{t_{i-1}< i'\le t_i}B_{i'}| \le M_i + 12r\beta n$. As $(m_{i,j})_{i\in[k],j\in[r]}$ is an $r$-equitable integer partition of $n$ and $\sigma$ is $6\xi' n$-balanced this implies
\[  
m_{i,j} - \xi n \le  \frac{M_i}r - 12\beta n - 6\xi' n \le |f^{-1}(i,j)| \le \frac{M_i}r + 12\beta n + 6\xi' n \le m_{i,j} + \xi n 
\]
for every $j\in[r]$. Hence property~\ref{lem:forH:H2} is satisfied.

Let $\{u,v\}\in E(H)\setminus E(H[X])$ with $u\notin X$. Since vertices with colour $0$ and their neighbours lie in $X$, we know that therefore $\sigma(u)\neq 0\neq \sigma(v)$. Hence $f(u)=(i,\sigma(u))$ and $f(v)=(i',\sigma(v))$ for some $i,i'\in[r]$. If $i\neq i'$, $u$ and $v$ must both be boundary vertices, which contradicts $u\notin X$. Hence $i=i'$ and property~\ref{lem:forH:H4} follows.

Let $\{u,v\}\in E(H[X])$. As $\sigma$ is a proper $(r+1)$-colouring, $\sigma(u)\neq \sigma(v)$. First assume that $\sigma(u)=0$. Then there is an index $i\in [k]$ such that $f(u)=s_i$ and $f(v)=(i,\sigma(v))$. But $\{s_i,(i,\sigma(v))\}\in E(R_k^r)$ by condition~\ref{lem:forH:R2} and so~\ref{lem:forH:H3} holds in this case. It remains to consider the case $\sigma(u)\neq 0 \neq \sigma(v)$. This implies that both $u,v$ are boundary vertices of different colour. 
Since we started with an ordering of bandwidth at most $\beta n$ we have $f(u)=(i,\sigma(u))$ and $f(v)=(i',\sigma(v))$ with $|i-i'|\le1$. Hence $\{f(u),f(v)\}\in E(B_k^r)\subseteq E(R_k^r)$ by condition~\ref{lem:forH:R1} and so property~\ref{lem:forH:H3} also holds in this case.
\end{proof}

\section{A Blow-up Lemma for arrangeable graphs}  \label{sec:BlowUp}

In this section we provide a Blow-up Lemma type result which we shall apply
to prove Theorem~\ref{thm:BolKom:arr} and Theorem~\ref{thm:app:2}.
This results builds on the following Blow-up Lemma for arrangeable graphs
from~\cite{BKTW_blowup}.

\begin{thm}[Arrangeable Blow-up Lemma, full version~\cite{BKTW_blowup}] \label{thm:BK:Blow-up:arr:full}
  For all $C,a,\Delta_R,\kappa \in \N$ and for all $\delta',c>0$ there exist
  $\eps',\alpha'>0$ such that for every integer $s$ there is $n_0$ such that
  the following is true for every $n\ge n_0$.
  Assume that we are given 
 \begin{enumerate}[label=\rom]
    \item\label{item:Blow-up:R} a graph~$R$ on vertex set $[s]$ with $\Delta(R)<\Delta_R$,
    \item\label{item:Blow-up:H} an $a$-arrangeable $n$-vertex graph $H$
      with maximum degree $\Delta(H)\le \sqrt{n}/\log n$, together with a
      partition $V(H)=W_1\dcup\dots\dcup W_s$ such that $uv\in E(H)$ implies $u\in W_i$ and $v\in W_j$ with $ij\in E(R)$,
    \item\label{item:Blow-up:G} a graph $G$ with a partition $V(G)=V_1\dcup\dots\dcup V_s$ that is
      $(\eps',\delta')$-super-regular on $R$ and has $|W_i|\le|V_i|=:n_i$ and $n_i\le \kappa\cdot n_j$ for all $i,j\in[s]$,
    \item for every $i\in [s]$ a set $S_i\subseteq W_i$ of at most
      $|S_i|\le \alpha n_i$ \emph{image restricted vertices}, such that
      $|N_H(S_i)\cap W_j|\le \alpha n_j$ for all $ij\in
      E(R)$,
    \item\label{item:Blow-up:ir} and for every $i\in[s]$ a family
      $\cI_i=\{I_{i,1},\dots,I_{i,C}\}\subseteq 2^{V_i}$ of permissible \emph{image
      restrictions}, of size at least $|I_{i,j}|\ge c n_i$ each, together
      with a mapping $I\colon S_i\to \cI_i$, which assigns a permissible
      image restriction to each image restricted vertex.
  \end{enumerate}
  Then there exists an embedding $\phi\colon V(H) \to V(G)$ such that
  $\phi(W_i)=V_i$ and $\phi(x) \in I(x)$ for every $i\in[s]$ and every
  $x\in S_i$.
\end{thm}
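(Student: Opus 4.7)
The plan is to prove Theorem~\ref{thm:BK:Blow-up:arr:full} by a randomised greedy embedding strategy tailored to the arrangeability ordering of~$H$, following the general blueprint of Kahn's blow-up lemma for arrangeable graphs as adapted in~\cite{BKTW_blowup}. First fix an ordering $(x_1,\dots,x_n)$ of $V(H)$ witnessing $a$-arrangeability. The fundamental structural input is that for each $x_i$ the back-$2$-neighbourhood $N(N(x_i)\cap\Right_i)\cap\Left_i$ has size at most~$a$, which will control the dependencies between candidate-set updates in the random embedding.

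Before starting, reserve in each cluster $V_j$ a small \emph{buffer} $B_j\subseteq W_j$ of vertices that are not image-restricted, so that non-buffer vertices can be embedded first by the random procedure and the buffers placed at the end by a matching. For each unembedded $y\in W_j$ maintain a \emph{candidate set}
\begin{equation*}
  C(y) \deff \Big(\bigcap_{\substack{z\in N_H(y)\\ z\text{ already embedded}}} N_G(\phi(z))\Big)\cap V_j,
\end{equation*}
intersected with $I(y)$ when $y\in S_j$. In the main phase, embed non-buffer vertices in reverse arrangeability order, at each step choosing $\phi(x_i)$ uniformly at random from $C(x_i)$. The invariant to maintain --- by an Azuma martingale argument, or equivalently by a Lov\'asz Local Lemma argument over suitably tiered bad events --- is that for every remaining vertex and every buffer $y\in W_j$, $|C(y)|$ is approximately $(\delta')^{d(y)}|V_j|$, where $d(y)$ is the number of already-embedded neighbours of~$y$; super-regularity gives the expected contraction per embedded neighbour, and $\eps'$-regularity gives the concentration. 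Finally, place the buffers by a K\"onig--Hall matching in the bipartite graph joining each buffer $b\in B_j$ to the unused vertices of $V_j$ that lie in $C(b)$; the invariants together with super-regularity ensure Hall's condition.

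The main obstacle is handling the unbounded maximum degree $\Delta(H)\le\sqrt{n}/\log n$. A single embedding step can shrink the candidate sets of up to $\Delta(H)$ later vertices simultaneously, so the usual bounded-degree martingale estimates do not immediately give concentration strong enough to survive a union bound over all $O(n)$ vertices. This is precisely where $a$-arrangeability is used: the bound $|N(N(x_i)\cap\Right_i)\cap\Left_i|\le a$ means that the joint shrinkage of candidate sets is governed by short-range dependencies with bounded ``arrangeability width'', so one can set up a careful martingale exposure (or local-lemma-style dependency graph) that still delivers concentration. The bound $\sqrt n/\log n$ on $\Delta(H)$ is precisely what is needed to make the per-vertex failure probability small enough for the union bound to close, and matching these two bounds is the delicate technical heart of the proof.
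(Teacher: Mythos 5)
First, a point of comparison: this paper does not prove Theorem~\ref{thm:BK:Blow-up:arr:full} at all --- it is quoted as an external result from~\cite{BKTW_blowup}, and the present paper only \emph{applies} it (in Theorem~\ref{thm:Blow-up:arr:mixed} and Theorem~\ref{thm:AlBrSk:ext}). So your attempt can only be measured against the proof in~\cite{BKTW_blowup}, and at the level of architecture your outline has the right shape: a random greedy embedding along the arrangeability ordering, candidate sets updated through (super-)regular pairs, buffer vertices finished off by a K\"onig--Hall matching, and arrangeability used to tame the dependencies that unbounded $\Delta(H)$ creates.

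However, there are genuine gaps. The most concrete one is your decision to embed in \emph{reverse} arrangeability order. In the ordering of Definition~\ref{def:arrangeable} it is the \emph{left}-degrees that are bounded: if $x_j$ is the rightmost left-neighbour of $x_i$, then $N(x_i)\cap\Left_j\subseteq N\big(N(x_j)\cap\Right_j\big)\cap\Left_j$, so every vertex has at most $a+1$ neighbours to its left, while its right-degree can be as large as $\Delta(H)$ (a star is $1$-arrangeable with the centre placed \emph{first}). Embedding right-to-left therefore allows a vertex to have up to $\Delta(H)=\sqrt n/\log n$ already-embedded neighbours at the moment it is placed, and your invariant $|C(y)|\approx(\delta')^{d(y)}|V_j|$ becomes vacuous since $(\delta')^{\Delta(H)}|V_j|<1$. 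One must embed left-to-right, so that $d(y)\le a+1$ whenever $y$ is placed; arrangeability proper (beyond the implied degeneracy) is then what bounds the set of left-vertices on which the candidate sets of the pending right-neighbours jointly depend. Second, the sentence ``by an Azuma martingale argument, or equivalently by a Lov\'asz Local Lemma argument'' asserts exactly the part of the proof that is hard: a concentration argument that survives a union bound when single steps affect up to $\sqrt n/\log n$ candidate sets is the technical core of~\cite{BKTW_blowup}, the degree bound is sharp there (\cite[Proposition~35]{BKTW_blowup}), and an argument that worked from degeneracy-type information alone would essentially resolve the Burr--Erd\H{o}s conjecture, as Section~\ref{sec:concl} points out --- so this step cannot be waved through. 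Finally, two smaller inaccuracies: the buffer is a set of $H$-vertices ($B_j\subseteq W_j$), not something reserved ``in each cluster $V_j$''; and the image restrictions must enter the invariant from the start (candidate sets of restricted vertices begin at size $\ge c\,n_i$, not $n_i$, and the hypothesis $|N_H(S_i)\cap W_j|\le\alpha n_j$ exists precisely so that these depleted candidate sets are rare), which your sketch mentions but does not integrate.
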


This theorem requires super-regularity for all pairs used in the
embedding. However, in applications this can usually not be guaranteed:
Lemma~\ref{lem:forG} for example provides a partition of $G$ where we know
only for very few regular pairs that they are also super-regular.

The standard approach to deal with a situation like this is to apply the
Blow-up Lemma only \emph{locally} to small groups of clusters where
super-regularity is guaranteed (such as the $K_r$-copies within $K_k^r$ in
Lemma~\ref{lem:forG}) and to use image restrictions to connect these local
embeddings into an embedding of the whole graph~$H$.

Instead, here we combine Theorem~\ref{thm:BK:Blow-up:arr:full} with a
randomisation step in order to obtain the following version of the Blow-up
Lemma for arrangeable graphs that can handle super-regular pairs and merely
regular pairs at once.

This result will allow us to embed a spanning graph~$H$ at once by imposing
the additional restriction that edges which are embedded into pairs that
are regular but not necessarily super-regular are confined to a small
subpair in this pair.

\begin{thm}[Arrangeable Blow-up Lemma, mixed version] 
\label{thm:Blow-up:arr:mixed}
For all $a,\Delta_R,\kappa$ and for all $\delta>0$ there exist $\eps,\alpha>0$ such that for every $s$ there is $n_0$ such that the following is true for every $n_1,\dots,n_s$ with $n_0\le n=\sum n_i$ and $n_i\le\kappa \cdot n_j$ for all $i,j\in[s]$. Assume that we are given graphs $R$, $R^*$ with $V(R)=[s]$, $\Delta(R)<\Delta_R$ and $R^*\subseteq R$, and graphs $G$, $H$ on $V(G)=V_1\dcup \dots \dcup V_s$, $V(H)=W_1\dcup\dots\dcup W_s$ with
\begin{enumerate}[label=\itmit{G\arabic{*}}]
\item\label{thm:BU:G1} $|V_i|=n_i$ for every $i\in[s]$,
\item\label{thm:BU:G2} $(V_i)_{i\in[s]}$ is $(\eps,\delta)$-regular on $R$, and 
\item\label{thm:BU:G3} $(V_i)_{i\in[s]}$ is $(\eps,\delta)$-super-regular on $R^*$.
\end{enumerate}
Further let $H$ be $a$-arrangeable, $\Delta(H)\le \sqrt{n}/\log n$, and let there be a function $f:V(H)\to [s]$ and a set $X\subseteq V(H)$ with 
\begin{enumerate}[label=\itmit{H\arabic{*}}]
\item\label{thm:BU:H1} $|X\cap W_i|\le \alpha n_i$ and $|N_H(X\cap W_i)\cap W_j|\le \alpha n_j$ for every $i\in[s]$ and every $ij\in E(R)$,
\item\label{thm:BU:H2} $|W_i|\le n_i$ for every $i\in[s]$,
\item\label{thm:BU:H3} for every edge $\{u,v\}\in E(H)$ we have $\{f(u),f(v)\}\in E(R)$,
\item\label{thm:BU:H4} for every edge $\{u,v\}\in E(H)\setminus E(H[X])$ we have $\{f(u),f(v)\}\in E(R^*)$.
\end{enumerate}
 Then $H\subseteq G$.
\end{thm}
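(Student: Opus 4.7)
The plan is to reduce Theorem~\ref{thm:Blow-up:arr:mixed} to the full Blow-up Lemma of Theorem~\ref{thm:BK:Blow-up:arr:full} by a sanitization of the partition combined with image restrictions for the vertices of $X$. The key structural input is \ref{thm:BU:H4}: an $H$-edge running along a pair $ij\in E(R)\setminus E(R^*)$ must have both endpoints in $X$. Together with \ref{thm:BU:H1}, which makes $X\cap W_i$ and its $H$-neighbourhood linearly small in each cluster, this means that the failure of super-regularity on $E(R)\setminus E(R^*)$ only needs to be repaired on the tiny sub-pair into which the $X$-vertices will be embedded.

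First I fix the constants: apply Theorem~\ref{thm:BK:Blow-up:arr:full} with $C:=2^{\Delta_R}$, $\delta':=\delta/2$ and some small $c>0$ to obtain $\eps',\alpha'$, then set $\eps:=\eps'/(100\Delta_R)$ and $\alpha:=\alpha'/(10\cdot 2^{\Delta_R})$ and take $n_0$ large. For each $ij\in E(R)\setminus E(R^*)$ define the exceptional set $B_{ij}:=\{v\in V_i:|N_G(v)\cap V_j|<(\delta-\eps)n_j\}$; by $\eps$-regularity of $(V_i,V_j)$ we have $|B_{ij}|\le\eps n_i$. Setting $\tilde V_i:=V_i\setminus\bigcup_{j:\,ij\in E(R)\setminus E(R^*)}B_{ij}$ yields $|\tilde V_i|\ge(1-\Delta_R\eps)n_i$, and the partition $(\tilde V_i)_{i\in[s]}$ is $(2\eps,\delta-2\eps)$-super-regular on \emph{every} edge of $R$: on $E(R^*)$ super-regularity is inherited from \ref{thm:BU:G3} because only $O(\eps n_i)$ vertices were removed, while on $E(R)\setminus E(R^*)$ it holds by the very definition of the $B_{ij}$. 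I then image-restrict every $x\in X\cap W_i$ to $I(x):=\tilde V_i$; the resulting image-restricted set $S_i\subseteq X\cap W_i$ satisfies $|S_i|\le\alpha n_i\le\alpha' n_i$ and $|N_H(S_i)\cap W_j|\le\alpha n_j\le\alpha' n_j$ by \ref{thm:BU:H1}, while $|I(x)|\ge(1-\Delta_R\eps)n_i\ge c n_i$.

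The principal obstacle is that hypothesis~(iii) of Theorem~\ref{thm:BK:Blow-up:arr:full} demands super-regularity of the full pair $(V_i,V_j)$ for every $ij\in E(R)$, whereas on $E(R)\setminus E(R^*)$ only the sub-pair $(\tilde V_i,\tilde V_j)$ has been super-regularised. This is where the randomisation step announced in the outline enters: one distributes the deleted vertices $V_i\setminus\tilde V_i$ at random among a bounded family of further image restrictions covering the non-$X$ vertices of $W_i$, so that Theorem~\ref{thm:BK:Blow-up:arr:full} can be applied to the partition $(V_i)$ with every $H$-edge embedded either into a fully super-regular $E(R^*)$-pair or into the super-regular sub-pair $(\tilde V_i,\tilde V_j)$ reached through the image restrictions. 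This rerouting is legitimate because, by \ref{thm:BU:H4}, non-$X$ vertices only have $H$-edges along $E(R^*)$-pairs, where $(V_i,V_j)$ is already super-regular by \ref{thm:BU:G3}, so the bad vertices in $V_i\setminus\tilde V_i$ can safely host non-$X$ images. The main technical effort is verifying that all the chosen constants line up and that hypotheses~(iv) and~(v) of Theorem~\ref{thm:BK:Blow-up:arr:full} are satisfied simultaneously by the combined system of $X$-image restrictions (to $\tilde V_i$) and random non-$X$-image restrictions (to the pieces of $V_i\setminus\tilde V_i$).
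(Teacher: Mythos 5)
Your first half matches the paper's proof: your sets $\tilde V_i$ are essentially the paper's sets $U_i$ (vertices of $V_i$ with degree at least $(\delta-\eps)n_j$ into every neighbouring cluster), and image-restricting the vertices of $X\cap W_i$ to them --- legitimate by \ref{thm:BU:H1} and $|\tilde V_i|\ge(1-\Delta_R\eps)n_i$ --- is exactly how the paper routes the $X$--$X$ edges guaranteed by \ref{thm:BU:H4} into sub-pairs where degrees are large. The gap is in your second half. Hypothesis \itm{iii} of Theorem~\ref{thm:BK:Blow-up:arr:full} demands that the host partition $(V_i)_{i\in[s]}$ be super-regular on \emph{every} edge of the reduced graph passed to the lemma; this is a property of $G$ and the partition alone, and no choice of image restrictions can repair it. Your plan to ``apply Theorem~\ref{thm:BK:Blow-up:arr:full} to the partition $(V_i)$'' therefore never gets off the ground: for $ij\in E(R)\setminus E(R^*)$ the pair $(V_i,V_j)$ is merely regular, so the hypothesis fails regardless of how the embedding is meant to be routed. (Applying the lemma with reduced graph $R^*$ instead does not help, since the $X$--$X$ edges along $E(R)\setminus E(R^*)$ would then violate hypothesis \itm{ii}.) Your proposed fix also breaks the lemma's quantitative constraints in two ways: image restrictions may only be imposed on sets $S_i$ with $|S_i|\le\alpha' n_i$, whereas the non-$X$ vertices you want to restrict are almost all of $W_i$ (note that $\sum_i|W_i|=n=\sum_i n_i$ together with \ref{thm:BU:H2} forces $|W_i|=n_i$, so $H$ is spanning and the bad vertices cannot simply be discarded either); and permissible restrictions must have size at least $cn_i$, while $|V_i\setminus\tilde V_i|\le\Delta_R\eps n_i$, so ``pieces of $V_i\setminus\tilde V_i$'' are not permissible image restrictions.

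The missing idea is that the randomisation must be applied to the \emph{host graph}, not to the image restrictions. The paper constructs an auxiliary graph $G'\supseteq G$ by adding, for each $ij\in E(R)\setminus E(R^*)$ and each vertex $v\in V_i$ with $|N_G(v)\cap V_j|<(\delta-\eps)n_j$, edges from $v$ to $\delta n_j$ randomly chosen vertices of $V_j$. With positive probability every $R$-pair of $G'$ is $(2\eps,\delta-\eps)$-super-regular, while $G'[V_i\cup V_j]=G[V_i\cup V_j]$ for $ij\in E(R^*)$ and $G'[U_i\cup U_j]=G[U_i\cup U_j]$ for all $ij\in E(R)$, since every added edge has an endpoint outside $U_i$. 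A single application of Theorem~\ref{thm:BK:Blow-up:arr:full} to $G'$ (with $C=1$, $c=1/2$, and the one restriction $U_i$ for $S_i=W_i\cap X$) embeds $H$ into $G'$, and by \ref{thm:BU:H4} every edge of $H$ either lies along an $E(R^*)$-pair or has both images inside $U_i\cup U_j$, hence uses only edges of $G$. Your structural observations survive intact, but without the auxiliary graph $G'$ (or some equivalent device that makes hypothesis \itm{iii} literally true) the reduction to Theorem~\ref{thm:BK:Blow-up:arr:full} cannot be completed.
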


The idea of the proof is as follows. If $R=R^*$, that is, if all edges in
$R$ correspond to super-regular pairs in $G$, we are done by
Theorem~\ref{thm:BK:Blow-up:arr:full}. 
In general of course this will not be the case. However, we will
artificially create a situation like that:
we carefully construct an auxiliary graph
$G'\supseteq G$ which also has~$R$ as a reduced graph, but which has super-regular
pairs for \emph{all} edges in $R$. We then use Theorem~\ref{thm:BK:Blow-up:arr:full}
to embed~$H$ into~$G'$.
It will then remain to show that we constructed~$G'$ (and the image
restrictions used in the application of Theorem~\ref{thm:BK:Blow-up:arr:full})
sufficiently carefully that this embedding in fact uses only edges from~$G$.

\begin{proof}[of Theorem~\ref{thm:Blow-up:arr:mixed}]
Let $a, \Delta_R, \kappa$ and $\delta>0$ be given. Let $\eps',\alpha'>0$ as in Theorem~\ref{thm:BK:Blow-up:arr:full} with $C\deff 1,a,\Delta_R,\kappa,\delta'\deff\delta/2$, and $c\deff 1/2$ and set $\eps\deff \min\{\eps'/2,1/(2\Delta_R),\delta/2\}$, $\alpha\deff\alpha'$. Let $s$ be given and choose $n_0$ as given by Theorem~\ref{thm:BK:Blow-up:arr:full}. Now let $R,R^*,G,H$ have the required properties. In particular let $V(G)=V_1\dcup \dots \dcup V_s$, $V(H)=W_1\dcup\dots\dcup W_s$ be partitions such that $(V_i)_{i\in[s]}$ is $(\eps,\delta)$-regular on $R$ and $(\eps,\delta)$-super-regular on $R^*$.

For $i\in [s]$ define $U_i$ to be the set of all vertices $v\in V_i$ with $|N_G(v)\cap V_j|\ge (\delta-\eps) n_j$ for all $j\in N_R(i)$. Since $\Delta(R)< \Delta_R$ and all pairs $(V_i,V_j)$ with $j\in N_R(i)$ are $(\eps,\delta)$-regular we have
\begin{align} \label{eq:mixed:restrictions}
  |U_i| \ge |V_i| - \Delta_R \eps |V_i| \ge \tfrac 12 |V_i| \;.
\end{align}

In the next step we construct a graph $G'$ which is super-regular on all pairs $(V_i,V_j)$ with $ij\in E(R)$. For every $ij \in E(R)\setminus E(R^*)$ we do the following. For every vertex $v\in V_i$ with $|N_G(v)\cap V_j|< (\delta-\eps) n_j$ we add edges to $\delta n_j$ randomly selected vertices in $V_j$ thus ensuring the minimum degree for $v$ in $V_j$. 
Let $G'$ be the resulting graph. With positive probability, all pairs $(V_i,V_j)$ with $ij\in E(R)$ are now $(2\eps,\delta-\eps)$-super-regular in $G'$. In particular, there exists at least one graph $G'$ with $(V_i,V_j)$ being an $(2\eps,\delta-\eps)$-super-regular pair in $G'$ for every $ij\in E(R)$ and 
\begin{align}\label{eq:mixed:subsets:1}
G[V_i\cup V_j]&=G'[V_i\cup V_j] &\text{if $ij\in E(R^*)$,}\\ \label{eq:mixed:subsets:2}
G[U_i\cup U_j]&=G'[U_i\cup U_j] &\text{if $ij\in E(R)$.\hspace{.15cm}}
\end{align}

As $G'$ is $(\eps',\delta')$-super-regular for every $ij\in E(R)$ we have
$H\subseteq G'$ by Theorem~\ref{thm:BK:Blow-up:arr:full} even if, for every
$i\in[s]$, we restrict the embedding of vertices in $S_i\deff W_i\cap X$ to
$U_i\in \cI_i\deff \{U_i\}$. This is possible
by~\eqref{eq:mixed:restrictions} and the fact that $|W_i\cap X|\le \alpha
n_i$ and $|N_H(W_i\cap X)\cap W_j|\le \alpha n_j$ for all $i\in[s]$ and all
$ij\in E(R)$.

Moreover, every $uv \in E(H)\cap W_i\times W_j$ with $ij\in E(R)\setminus E(R^*)$ has $u,v\in X$. Therefore, the embedding of $H$ into $G'$ also is an embedding of $H$ into $G$ by~\eqref{eq:mixed:subsets:1} and~\eqref{eq:mixed:subsets:2}.
\end{proof}

\section{Proof of Theorem~\ref{thm:BolKom:arr}}  \label{sec:Proof:1}

\renewcommand{\labelenumi}{(R\arabic{enumi})}

Our strategy for this proof is as follows. We use the Lemma for $G$
(Lemma~\ref{lem:forG}) and the Lemma for $H$ (Lemma~\ref{lem:forH}) to get
a partition of $H$ and a matching regular partition of $G$ which is
$(\eps,\delta)$-(super-)regular wherever edges of $H$ are to be
embedded. Given these partitions, the Blow-up Lemma
(Theorem~\ref{thm:Blow-up:arr:mixed}) guarantees an embedding of $H$ into $G$.

\begin{proof}[of Theorem~\ref{thm:BolKom:arr}]
We first set up the constants. Given $r$, $a$, $\gamma>0$, let $d$, $\eps_0$ be given by Lemma~\ref{lem:forG}. Set $\Delta_R\deff 3r+1/\gamma+1$, $\kappa\deff 2$ and $\delta\deff d$ and let $\eps_{T.\ref{thm:Blow-up:arr:mixed}}$ and $0<\alpha\le 1$ be given by Theorem~\ref{thm:Blow-up:arr:mixed}. Plug this $\eps \deff \min\{\eps_0,1/4,\eps_{T.\ref{thm:Blow-up:arr:mixed}}\}$ into Lemma~\ref{lem:forG} and obtain $K_0,\xi_0$. If necessary decrease $\xi_0$ such that $\xi_0\le \alpha/(2 r K_0)$. 
Choose $\beta,\xi$ such that $\xi \le \xi_0$ and $\beta \le \xi^2/(1200r)$.
Finally for every $s\le r\cdot K_0$ let $n_0$ be sufficiently large for the application of Theorem~\ref{thm:Blow-up:arr:mixed}.

Now let $G$ be any graph on $n\ge n_0$ vertices with $\delta(G)\ge (\tfrac{r-1}r+\gamma)n$. Then Lemma~\ref{lem:forG} returns a $k\le K_0$ and a graph $\tilde{R}_k^r$ on vertex set $[k]\times[r]$ and an $r$-equitable integer partition $(m_{i,j})_{i\in[k],j\in[r]}$ with properties~\ref{lem:forG:R1}--\ref{lem:forG:R3}. In particular, 
\begin{align*}
m_{i,j}&\ge \frac{n}{2kr}\ge \frac{n}{2k}\frac{2K_0\xi_0}{\alpha}\ge \xi n \ge \sqrt{1200r\beta}n\ge 12\beta n
\end{align*}
for all $i\in[k]$, $j\in[r]$.

With this integer partition we return to Lemma~\ref{lem:forH}. Let $H$ satisfy the conditions of Theorem~\ref{thm:BolKom:arr}, in particular $H$ is $r$-chromatic and has bandwidth at most $\beta n$. Hence, clearly there is a labelling of bandwidth at most $\beta n$ with a $(10/\xi,\beta)$-zero free $(r+1)$-colouring. Furthermore, we need to show that there is a graph $R_k^r$ with $B_k^r\subseteq R_k^r\subseteq \tilde{R}_k^r$ which satisfies conditions~\ref{lem:forH:R1} and~\ref{lem:forH:R2} of Lemma~\ref{lem:forH} and additionally has $\Delta(R_k^r)< \Delta_R$. Indeed, $R_k^r$ can be obtained as follows. Recall that $\delta(\tilde{R}_k^r)\ge \big(\tfrac{r-1}r + \gamma/2\big)kr$ by property~\ref{lem:forG:R2}. Thus for every $i\in[k]$ there are at least $\tfrac\gamma2 kr$ vertices $v\in ([k]\setminus\{i\})\times [r]$ with $\{v,(i,j)\}\in E(\tilde{R}_k^r)$ for all $j\in [r]$. 
We say that such a vertex $v$ covers $i$. Now, consecutively choose for each $i=1,\dots,k$ a vertex $v_i\in [k]\times [r]$ among those vertices covering $i$ which has been used as $v_{i'}$ as few times as possible for $i'<i$. Then the edges of $R_k^r$ only consist of edges of $B_k^r$ in $\tilde{R}_k^r$ and all edges $\{v_i,(i,j)\in E(\tilde{R}_k^r)$. Since $\Delta(B_k^r) \le 3r$ we have by the choice of the $v_i$ that
$\Delta(R_k^r) \le 3r+2/\gamma<\Delta_R$. Hence $R_k^r$ satisfies conditions~\ref{lem:forH:R1} and~\ref{lem:forH:R2} of Lemma~\ref{lem:forH}. 

As $r,k,\beta,\xi$ and $R_k^r$ and the $r$-equitable integer partition $(m_{i,j})_{i\in[k],j\in[r]}$ satisfy the requirements of Lemma~\ref{lem:forH}, we obtain a mapping $f:V(H) \to [k]\times[r]$ and a set $X$ which satisfy conditions~\ref{lem:forH:H1}--\ref{lem:forH:H4}. In the next step we will partition $V(G)$ into $\parti{V}$. A vertex $x\in V(H)$ is then embedded into $V_{i,j}\subseteq V(G)$ if and only if $x\in f^{-1}(i,j)$.

Define $n_{i,j}\deff |f^{-1}(i,j)|$ and note that $m_{i,j}-\xi_0 n \le n_{i,j}\le m_{i,j}+\xi_0 n$ by property~\ref{thm:BU:H2}. Thus there exists a partition of $V(G)$ into $(V_{i,j})_{i\in[k],j\in[r]}$ with properties~\ref{lem:forG:G1}--\ref{lem:forG:G3} by Lemma~\ref{lem:forG}. Moreover, $n_{i,j}\le 2 n_{i',j'}$ for all $i,i'\in[k]$ and $j,j'\in[r]$ by property~\ref{lem:forG:R3} and property~\ref{thm:BU:H2} as
\[
  n_{i,j} \le m_{i,j}+\xi_0 n \le (1+\eps)\frac{n}{kr} + \xi_0 n \le 2\left((1-\eps)\frac{n}{kr}-\xi_0 n\right) \le 2\left(m_{i',j'}-\xi_0n\right) \le 2 n_{i',j'}\,.
\]

Now all conditions of Theorem~\ref{thm:Blow-up:arr:mixed} are satisfied and thus $H\subseteq G$.
\end{proof}

\section{Proof of Theorem~\ref{thm:app:2}} \label{sec:Proof:3}

The proof of Theorem~\ref{thm:app:2} closely follows the methods of Allen,
Brightwell and Skokan~\cite{AlBrSk10}.  The restriction on $\Delta(H)$ in
their result (Theorem~\ref{thm:AlBrSk}) originates from the embedding
result they use (Theorem 24 in~\cite{AlBrSk10}).  This embedding result in
turn relies on the Blow-up Lemma and the Lemma for~$H$
in~\cite{BoeSchTar09}.  The following Lemma~\ref{lem:forH:2} is a
consequence of our Lemma for~$H$ (Lemma~\ref{lem:forH}). We shall use this
lemma together with the Blow-up Lemma for arrangeable graphs
(Theorem~\ref{thm:BK:Blow-up:arr:full}) to extend the result of Allen,
Brightwell and Skokan to arrangeable graphs.
 
We denote by $P_m^r$ the $r$-th power of a path $P_m$, that is, $P_m^r$ has vertex set $[m]$ and edge set $\{uv: |u-v|\le r\}$.
Analogously, $C_m^r$ is the $r$-th power of the cycle $C_m$.
 
\begin{lem*} \label{lem:forH:2} 
  For any $\xi>0$ and for any natural numbers $r',m_0$ there
  exists $\beta>0$ such that the following is true.
  Let $H$ be a graph on $n$ vertices that 
  is $r$-colourable for $r\le r'$
  and has $\bw(H)\le \beta n$.  Then for any~$m$ with $2r\le m\le m_0$
  there exists a homomorphism $f: H \to C_m^r$ with $|f^{-1}(i)|\le \tfrac
  nm(1+\xi)$ for every $i\in[m]$.
\end{lem*}
\begin{proof}
  Let $\xi>0$ and $r',m_0$ be given.  We choose $k'$ sufficiently large so
  that $m_0/k'\le\xi/3$ and so that $(k'+r'-1)/m$ is integer for each
  $m\in[m_0]$ and $r\in[r']$. We set
  \begin{equation*}
     \xi'\deff\frac{\xi}{3k'r'} \qquad\text{and}\qquad
     \beta\deff\min\Big(\frac{\xi'^2}{1200r'}\,,\, \frac\xi{6k'r'}\Big)\,.
  \end{equation*}
  Assume that $H$ satisfies the requirements of the lemma. Observe
  that by the definition of~$\beta$ we can assume that the number of
  vertices~$n$ of~$H$ satisfies $n\ge 6k'r'/\xi\ge 6k'r/\xi$ and hence
  \begin{equation}
    \label{eq:forH:xi}
    1+\xi'n
    =\frac{n}{k'r}\Big(\frac{k'r}{n}+k'r\xi'\Big)
    =\frac{n}{k'r}\Big(\frac{k'r}{n}+\frac{\xi}{3}\Big)
    \le\frac{n}{k'r}\cdot\frac{\xi}{2}\,.
  \end{equation}
  Let $m$ with $2r\le m\le m_0$ be given.

  We would now like to start by applying Lemma~\ref{lem:forH} with parameters
  $r,k'$ and $\beta$, $\xi'$.  For this purpose let $R_{k'}^r$ be the graph
  obtained from $B_{k'}^r$ (defined in the beginning of Section~\ref{sec:Lemmas}) by adding all edges of the form
  $\{(i,j),(i+1,j)\}$ where $i\in[k'-1]$ and $i-j \equiv 0 \mod r$ (see Figure~\ref{fig:mapping:1}). These
  additional edges ensure that for every $i\in[k']$ there is a vertex
  $s_i=(i+1,i')$ or $s_i=(i-1,i')$ (where $i'\in[r]$ satisfies $i-i'\equiv 0 \mod r$) such
  that $\{s_i,(i,j)\}\in E(R_{k'}^r)$ for all $j\in[r]$.  Hence the graph
  $R_{k'}^r$ satisfies conditions~\ref{lem:forH:R1} and~\ref{lem:forH:R2}
  of Lemma~\ref{lem:forH}.

  Furthermore let $\lfloor
  n/(k'r)\rfloor\ffed m_{1,1}\le m_{1,2}\le \dots \le m_{k',r}\deff \lceil
  n/(k'r)\rceil$. Then Lemma~\ref{lem:forH} guarantees
  a mapping $f':V(H)\to [k']\times [r]$
  and a set $X\subseteq V(H)$ with properties
  \ref{lem:forH:H1}--\ref{lem:forH:H4}. In the
  following we call each set $f'^{-1}(i,j)$ with $i\in[k']$, $j\in[r]$ an
  \emph{$f'$-class} and use these classes to define a homomorphism $f:V(H)\to
  C_m^r$ with the properties promised by Lemma~\ref{lem:forH:2}.

  We will construct~$f$ in two further steps.  Recall that
  $V(R_{k'}^r)=[k']\times[r]$ and consider the $r$-th power of a path
  $P_{k'+r-1}^r$ on vertex set $V(P_{k'+r-1}^r)=[k'+r-1]$.  First we now define
  a mapping $f^*\colon [k']\times[r] \to [k'+r-1]$ whose purpose is to
  group the $f'$-classes and which is a homomorphism from $R_{k'}^r$ to
  $P_{k'+r-1}^r$. Let $(i,j)\in[k']\times[r]$. Observe that there are
  unique positive integers $\ell$ and $x$ such that $x\in[r]$ and $i=-(r-j) +
  r \cdot \ell + x$. Then set $f^*(i,j):=r \cdot \ell + j$ (see also
  Figure~\ref{fig:mapping:1}).
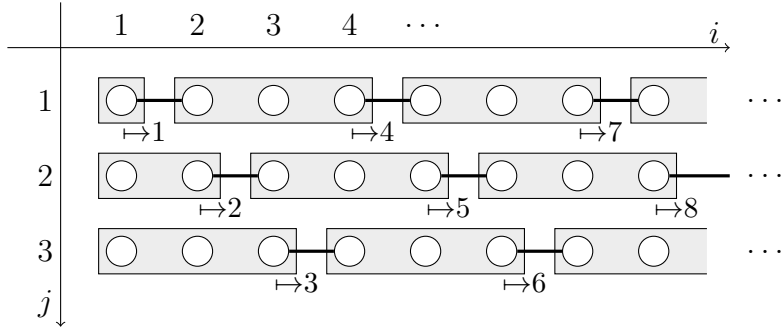
\begin{figure}[t]
\begin{center}
\tikzstyle{cluster}=[circle,draw,fill=white,inner sep=4pt]
\begin{tikzpicture}[scale=1,inner sep=-1pt]
\draw[->] (.2,.3) -- (.2,-4);
\node (l0) at (.0,-3.7) {$j$};
\node (l1) at (0,-1) {$1$};
\node (l2) at (0,-2) {$2$};
\node (l3) at (0,-3) {$3$};

\draw[->] (-.5,-.3) -- (9,-.3);
\node (r0) at (8.8,-.1) {$i$};
\node (r1) at (1,0) {$1$};
\node (r2) at (2,0) {$2$};
\node (r3) at (3,0) {$3$};
\node (r3) at (4,0) {$4$};
\node (r3) at (5,0) {$\dots$};

\node (c1) at (9.5,-1) {$\dots$};
\node (c1) at (9.5,-2) {$\dots$};
\node (c1) at (9.5,-3) {$\dots$};

\draw[fill=black!7] (.7,-.7) rectangle (1.3,-1.3) node [label=270:\small{$\mapsto$1}] {}
      (.7,-1.7) rectangle (2.3,-2.3) node [label=270:\small{$\mapsto$2}] {}
      (.7,-2.7) rectangle (3.3,-3.3) node [label=270:\small{$\mapsto$3}] {}
      (1.7,-.7) rectangle (4.3,-1.3) node [label=270:\small{$\mapsto$4}] {}
      (2.7,-1.7) rectangle (5.3,-2.3) node [label=270:\small{$\mapsto$5}] {}
      (3.7,-2.7) rectangle (6.3,-3.3) node [label=270:\small{$\mapsto$6}] {}
      (4.7,-.7) rectangle (7.3,-1.3) node [label=270:\small{$\mapsto$7}] {}
      (5.7,-1.7) rectangle (8.3,-2.3) node [label=270:\small{$\mapsto$8}] {};
\draw[fill=black!7] (8.7,-.7) -- (7.7,-.7) -- (7.7,-1.3) -- (8.7,-1.3)
      (8.7,-2.7) -- (6.7,-2.7) -- (6.7,-3.3) -- (8.7,-3.3);

\node (11) [cluster] at (1,-1) {};
\node (12) [cluster] at (1,-2) {};
\node (13) [cluster] at (1,-3) {};

\node (21) [cluster] at (2,-1) {};
\node (22) [cluster] at (2,-2) {};
\node (23) [cluster] at (2,-3) {};

\node (31) [cluster] at (3,-1) {};
\node (32) [cluster] at (3,-2) {};
\node (33) [cluster] at (3,-3) {};

\node (41) [cluster] at (4,-1) {};
\node (42) [cluster] at (4,-2) {};
\node (43) [cluster] at (4,-3) {};

\node (51) [cluster] at (5,-1) {};
\node (52) [cluster] at (5,-2) {};
\node (53) [cluster] at (5,-3) {};

\node (61) [cluster] at (6,-1) {};
\node (62) [cluster] at (6,-2) {};
\node (63) [cluster] at (6,-3) {};

\node (71) [cluster] at (7,-1) {};
\node (72) [cluster] at (7,-2) {};
\node (73) [cluster] at (7,-3) {};

\node (81) [cluster] at (8,-1) {};
\node (82) [cluster] at (8,-2) {};
\node (83) [cluster] at (8,-3) {};

\draw[very thick] (11) -- (21)
      (22) -- (32)
      (33) -- (43)
      (41) -- (51)
      (52) -- (62)
      (63) -- (73)
      (71) -- (81)
      (82) -- (9,-2);
\end{tikzpicture}
\caption{An illustration of $f^*$ for $r=3$. The white circle in column~$i$
  and row~$j$ represents the set $f'^{-1}(i,j)$.  The
  homomorphism $f^*$ groups these sets as indicated.
  The thick vertical edges
  indicate the additional edges $\{(i,j),s_i\}$ of the reduced graph $R^r_{k'}$.
  For example $f^*(4,2)=5$.}
\label{fig:mapping:1}
\end{center}
\end{figure}
This guarantees for all $y\in[k'+r-1]$ that at most~$r$ pairs $(i,j)$ are
mapped to~$y$, all of which have the same $j$-coordinate.  In fact only the
first and the last $r-1$ values~$y$ have less than $r$ such pairs mapped
to~$y$, which we call the \emph{exceptional preimages}. Moreover it is easy
to verify that $|f^*(i,j)-f^*(i',j')|\le r$ whenever $|i-i'|\le 1$, that
$f^*(i,j)=f^*(i',j')$ only if $j=j'$, and that $f^*(i,j)\neq f^*(s_i)$ for
all $i,i'\in[k']$ and $j,j'\in[r]$.  Hence $f^*$ is a homomorphism from
$R^r_{k'}$ to $P_{k'+r-1}^r$.

  Our second step is to define the mapping $f^{**}\colon [k'+r-1]\to[m]$ 
  by setting $f^{**}(y):= (y\mod m) + 1$ for all $y\in[k'+r-1]$.
  Clearly $f^{**}$
  is a homomorphism from $P_{k'+r-1}^r$ to $C_m^r$. In conclusion,
  $f\deff f^{**}\circ f^*\circ f'$ is a homomorphism from~$H$ to~$C_m^r$.

  It remains to verify that also $|f^{-1}(i)|\le \tfrac nm(1+\xi)$ for
  every $i\in[m]$.  Indeed, by~\ref{lem:forH:H2} of Lemma~\ref{lem:forH} we
  have $|(f')^{-1}(i,j)| = m_{i,j}\pm \xi' n$ for all $i\in[k'],j\in[r]$.
  Moreover, by construction the preimages of $f^*$ are all of size at
  most~$r$ and only $2(r-1)$ of these preimages, the exceptional preimages,
  are smaller than~$r$. The preimages of~$f^{**}$ are all of the same size
  and $f^{**}$ maps at most one vertex with exceptional preimage under
  $f^*$ to each vertex of~$C_m^r$.  Thus, because $f^{**}\circ f^*$ is a
  mapping from $[k']\times[r]$ to~$[m]$, the preimages of $f^{**}\circ f^*$
  are all of size $\frac{k'r}m \pm r$.  Hence, in total for each
  $i\in[m]$ we have
  \begin{equation*}\begin{split}
      |f^{-1}(i)| 
      &= \left(m_{i,j}\pm \xi' n\right) \cdot \left(\frac{k'r}m \pm r\right) 
      = \Big(\frac{n}{k'r}\pm 1 \pm \xi'n\Big) \cdot \frac{k'r}m\Big(1 \pm
      \frac{m}{k'}\Big) \\
      & \eqByRef{eq:forH:xi} \frac{n}{k'r}\Big(1 \pm \frac{\xi}{2}\Big) \cdot \frac{k'r}m\Big(1 \pm
      \frac{\xi}{3}\Big)
      =\frac nm (1\pm\xi)\,,
  \end{split}\end{equation*}
 where we used $m_{i,j}=\frac{n}{k'r}\pm1$ in the second equality and
  $\frac{m}{k'}\le\frac{m_0}{k'}\le\frac13\xi$ in the third.
\end{proof}

For the proof of Theorem~\ref{thm:app:2} we additionally need the
following lemma, which is implicit in~\cite{AlBrSk10} in the proof of
Theorem~\ref{thm:AlBrSk}. Before we can state this lemma we need some
further definitions.

Assume we are given a complete graph~$K_n$ whose edges are
red/blue-coloured.  Let~$A$ and~$B$ be disjoint vertex sets
in~$K_n$. Then $(A,B)$ is a \emph{coloured $\eps$-regular} pair if
$(A,B)$ is an $\eps$-regular pair in the subgraph of~$K_n$ formed by the
red edges. It is easy to see that such a pair is also $\eps$-regular in
blue.  A vertex partition $(V_i)_{i\in[s]}$ of $V(K_n)$ is called coloured
$\eps$-regular if all but at most $\eps\binom{s}{2}$ of the pairs
$(V_i,V_j)$ with $\{i,j\}\in\binom{s}{2}$ are not coloured $\eps$-regular.
The \emph{coloured reduced graph} $R$ corresponding to this partition is
the graph with vertex set~$[s]$ and an edge for exactly each coloured
$\eps$-regular pair. Each edge $ij$ of~$R$ is coloured in the
majority-colour of the edges of $(V_i,V_j)$. This clearly implies that if
$ij$ is a red edge of~$R$, then the subgraph of $(V_i,V_j)$ formed by the
red edges is $(\eps,\frac12)$-regular.

\begin{lem}[Implicit in~\cite{AlBrSk10}] \label{lem:RamseyPartition} 
  For every $\eps>0$, $r$, and $\tilde m$ there exists $k_0$ and $n_0$ such that
  the following is true for every $n\ge n_0$. Let the edges of $K_n$ be
  red/blue-coloured.
  \begin{enumerate}[label=\abc]
  \item\label{lem:RamseyPartition:a} The graph $K_n$ has a coloured
    $\eps$-regular partition $(V_i)_{i\in[k]}$ with $(2r+3)\tilde m\le k\le k_0$
    and $|V_1|\le|V_2|\le\dots|V_k|\le|V_1|+1$.
  \end{enumerate}
  Let $R$ be the coloured reduced graph corresponding to this partition and
  let $m$ be any multiple of $r+1$ with $k\ge (2r+3)m$.
  \begin{enumerate}[label=\abc,start=2]
  \item\label{lem:RamseyPartition:b} The graph~$R$ contains a monochromatic copy
    of $C_m^r$.
  \end{enumerate}
\end{lem}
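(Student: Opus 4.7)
The plan for part~\abc{a} is to invoke Szemerédi's Regularity Lemma in its $2$-coloured form. I would apply the standard SRL to the red subgraph of $K_n$ with regularity parameter~$\eps$ and with a lower bound of $(2r+3)\tilde m$ on the number of parts; this produces an $\eps$-regular partition with $(2r+3)\tilde m\le k\le k_0$ parts, where $k_0$ depends only on $\eps$ and $\tilde m$. Since the red and blue densities of any pair $(A,B)$ sum to~$1$, an $\eps$-regular pair in the red subgraph is automatically $\eps$-regular in the blue subgraph, so the partition is coloured $\eps$-regular in the sense of the lemma. The usual equalisation step ensures $|V_1|\le\dots\le|V_k|\le|V_1|+1$.

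For part~\abc{b} the task is to find a monochromatic copy of $C_m^r$ in the coloured reduced graph~$R$ on $k\ge(2r+3)m$ vertices; $R$ is $2$-edge-coloured and has at most $\eps\binom{k}{2}$ non-edges. I would first clean up: remove the (at most $2\sqrt{\eps}k$) vertices incident to more than $\sqrt{\eps}k$ non-edges, and then discard a further $O(\sqrt{\eps}k)$ vertices whose minority-colour degree is unusually high. The result is a $2$-edge-coloured graph $R''$ on $(1-O(\sqrt{\eps}))k\ge (2r+3-o(1))m$ vertices in which each vertex has at most $O(\sqrt{\eps}k)$ non-neighbours and, say, the red subgraph has minimum degree at least $(\tfrac12-o(1))|R''|$.

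The combinatorial heart of the argument, and the main obstacle, is then purely Ramsey-theoretic: any such near-complete $2$-edge-coloured graph on at least $(2r+3)m$ vertices contains a monochromatic $C_m^r$. This is essentially the Ramsey-number upper bound $R(C_m^r)\le(2r+3)m+o(m)$, which explains the parameter $(2r+3)\tilde m$ in the statement and is where the divisibility condition $(r+1)\mid m$ enters (through the chromatic number $\chi(C_m^r)=r+1$). Following the strategy of Allen, Brightwell, and Skokan \cite{AlBrSk10}, I would locate a long monochromatic $r$-th power of a path in the majority-colour subgraph of $R''$ using a connectivity/stability argument on the dense majority-colour graph, and then close it into an $r$-th power of a cycle of length exactly~$m$ using a short monochromatic arc elsewhere in $R''$. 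The divisibility $(r+1)\mid m$ is used to align the colour classes where the two pieces are pasted together; this is the step one has to execute most carefully, and it is the reason the statement requires $m$ to be a multiple of $r+1$. Extracting this argument cleanly from \cite{AlBrSk10} delivers the desired monochromatic $C_m^r$ in $R$ and thereby part~\abc{b}.
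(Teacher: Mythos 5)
The paper never actually proves Lemma~\ref{lem:RamseyPartition}: it is stated as being implicit in the proof of Theorem~\ref{thm:AlBrSk} in~\cite{AlBrSk10}, and your proposal ultimately terminates in the same citation, so at the level of overall strategy you and the paper agree. Your treatment of part~(a) is exactly the intended argument: the equitable Regularity Lemma applied to the red graph with a prescribed lower bound of $(2r+3)\tilde m$ on the number of parts (giving $k_0=k_0(\eps,r,\tilde m)$), together with the observation that an $\eps$-regular pair in red is automatically $\eps$-regular in blue because the red and blue densities of any pair sum to~$1$.

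Your sketch of part~(b), however, contains a step that would fail if executed as written. After deleting the $O(\sqrt{\eps}\,k)$ vertices meeting many non-edges, you claim one can discard a further $O(\sqrt{\eps}\,k)$ vertices so that, ``say,'' the red subgraph of $R''$ has minimum degree $(\tfrac12-o(1))|R''|$. This is impossible: in the colouring where every edge is blue, no small deletion makes red dense, and one cannot pick red ``without loss of generality,'' because the majority colour is a property of each individual vertex, not of the colouring. What cleanup genuinely yields is that every surviving vertex has degree at least $(\tfrac12-o(1))k$ in \emph{its own} majority colour; the vertices then split into a red-majority class and a blue-majority class, and the argument in~\cite{AlBrSk10} must proceed by a case analysis on the relative sizes and interaction of these classes (a stability-type dichotomy), not by a connectivity argument inside a single globally dense colour. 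Your identification of where $(r+1)\mid m$ enters --- $\chi(C_m^r)=r+1$, and the alignment of cyclic colour classes when a monochromatic $r$-th power of a path is closed into a power of a cycle of length exactly $m$ --- is sound, and deferring the detailed extraction to~\cite{AlBrSk10} is legitimate given that the paper does precisely the same; but as a self-contained plan, the purported reduction to a one-colour minimum-degree condition is a genuine gap.
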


We now apply Lemma~\ref{lem:RamseyPartition}, Lemma~\ref{lem:forH:2} and
Theorem~\ref{thm:BK:Blow-up:arr:full} to derive the following result, which
in view of~\eqref{eq:Heawood}, \eqref{eq:bwS}, \eqref{eq:DeltaS}
and~\eqref{eq:aS} directly implies Theorem~\ref{thm:app:2}.

\begin{thm} \label{thm:AlBrSk:ext} 
  Given $a\ge 1$, there exists $n_0$ and $\beta>0$ such that, whenever
  $n\ge n_0$ and $H$ is an $a$-arrangeable $n$-vertex graph with maximum
  degree at most $\sqrt{n}/\log n$ and $\bw(H)\le\beta n$, we have $R(H)\le
  (2\chi(H)+4)n$.
\end{thm}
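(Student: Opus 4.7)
The plan is to adapt the Ramsey argument of Allen, Brightwell and Skokan~\cite{AlBrSk10} by replacing their bounded-degree embedding step with a combination of Lemma~\ref{lem:forH:2} and the arrangeable Blow-up Lemma (Theorem~\ref{thm:BK:Blow-up:arr:full}). Let $r:=\chi(H)$, which is bounded by a constant depending only on $a$ since $a$-arrangeable graphs have bounded chromatic number. Set $N:=(2r+4)n$ and fix an arbitrary red/blue colouring of $E(K_N)$; the goal is to find a monochromatic copy of $H$ in $K_N$.

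I would choose the constants backwards. First apply Theorem~\ref{thm:BK:Blow-up:arr:full} with $\Delta_R:=2r+1$, $\kappa:=2$, $\delta':=1/3$, $C:=1$, $c:=1/2$ to extract $\eps_{BU},\alpha_{BU}$; then set $\xi:=\min\bigl(\alpha_{BU},\tfrac{1}{4(2r+3)}\bigr)$ and $\tilde m:=2r$; apply Lemma~\ref{lem:RamseyPartition} with inputs $\eps_{BU},r,\tilde m$ to obtain $k_0$ and some $n_0^{\mathrm{R}}$; set $m_0:=\lfloor k_0/(2r+3)\rfloor$; and finally feed $\xi,r,m_0$ into Lemma~\ref{lem:forH:2} to obtain $\beta$. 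Now applying Lemma~\ref{lem:RamseyPartition}(a) to the colouring of $K_N$ produces a coloured $\eps_{BU}$-regular partition $V(K_N)=V_1\dcup\dots\dcup V_k$ with cluster sizes in $[\lfloor N/k\rfloor,\lceil N/k\rceil]$ and $(2r+3)\tilde m\le k\le k_0$. Picking $m$ as the largest multiple of $r+1$ with $(2r+3)m\le k$ and applying Lemma~\ref{lem:RamseyPartition}(b) locates a monochromatic copy of $C_m^r$ in the coloured reduced graph; say this copy is red. After relabelling, the clusters $V_1,\dots,V_m$ satisfy that $(V_i,V_j)$ is an $(\eps_{BU},\tfrac12)$-regular pair in the red subgraph of $K_N$ for every $ij\in E(C_m^r)$.

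Next, Lemma~\ref{lem:forH:2} applied to $H$ yields a homomorphism $f\colon H\to C_m^r$ with $|f^{-1}(i)|\le(1+\xi)n/m$ for every $i\in[m]$. By the choice $N=(2r+4)n$, the inequality $(2r+3)m\le k$ and $\xi<1/(2r+3)$, a short computation gives $|V_i|>(1+2\xi)n/m$, so enough room remains. To upgrade mere regularity to super-regularity along $C_m^r$ in red, discard from each $V_i$ the at most $2r\cdot\eps_{BU}|V_i|$ vertices having fewer than $(\tfrac12-\eps_{BU})|V_j|$ red neighbours in some $V_j$ with $ij\in E(C_m^r)$. The trimmed subsets $V_i'\subseteq V_i$ form $(2\eps_{BU},\tfrac12-\eps_{BU})$-super-regular red pairs on $C_m^r$, and still satisfy $|V_i'|\ge|f^{-1}(i)|$.

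Finally invoke Theorem~\ref{thm:BK:Blow-up:arr:full} with reduced graph $C_m^r$ (which has maximum degree $2r<\Delta_R$), partitions $W_i:=f^{-1}(i)$ of $V(H)$ and $(V_i')_{i\in[m]}$ of the red subgraph, empty image-restriction sets $S_i=\emptyset$, and trivial image-restriction family $\cI_i=\{V_i'\}$. All hypotheses are satisfied: $H$ is $a$-arrangeable with $\Delta(H)\le\sqrt n/\log n$ by assumption, $f$ is a homomorphism by Lemma~\ref{lem:forH:2}, cluster sizes lie within a factor of $\kappa=2$ of one another, and super-regularity holds by construction. The output is an embedding of $H$ into the red subgraph of $K_N$, witnessing $R(H)\le N=(2\chi(H)+4)n$. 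The main obstacle is the parameter bookkeeping: one must verify that the slack in the inflation factor $(2r+4)/(2r+3)$ simultaneously absorbs the $(1+\xi)$-error from Lemma~\ref{lem:forH:2} and the $\Theta(\eps_{BU})$ super-regularisation loss, which forces $\xi$ and $\eps_{BU}$ to be chosen strictly smaller than $1/(2r+3)$; this is standard but requires care because all three sources of loss compete for the same budget.
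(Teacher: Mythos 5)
Your overall architecture coincides with the paper's proof of Theorem~\ref{thm:AlBrSk:ext}: Lemma~\ref{lem:RamseyPartition} to get a coloured regular partition and a monochromatic $C_m^r$ in the reduced graph, Lemma~\ref{lem:forH:2} to fold $H$ onto $C_m^r$ with classes of size at most $(1+\xi)n/m$, trimming to upgrade the red pairs to super-regular, then Theorem~\ref{thm:BK:Blow-up:arr:full} without image restrictions. But your parameter choice $\tilde m:=2r$ breaks the argument quantitatively. Lemma~\ref{lem:RamseyPartition} only guarantees $k\ge(2r+3)\tilde m$, so in the worst case $k=2r(2r+3)$, and then the largest multiple $m$ of $r+1$ with $(2r+3)m\le k$ is $m=r+1$ (since $2(r+1)>2r$). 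This is fatal twice over. First, Lemma~\ref{lem:forH:2} requires $2r\le m\le m_0$, which fails for every $r\ge 2$. Second, the clusters are then too small: $|V_i|\approx (2r+4)n/k=(2r+4)n/\bigl(2r(2r+3)\bigr)$, whereas you need $|V_i|\ge(1+2\xi)n/m=(1+2\xi)n/(r+1)$, and the ratio $(2r+4)(r+1)/\bigl(2r(2r+3)\bigr)=(r^2+3r+2)/(2r^2+3r)$ is strictly below $1$ for $r\ge2$ — so your claimed ``short computation gives $|V_i|>(1+2\xi)n/m$'' is false with these parameters, and the classes $W_i$ do not fit. Note also that the hypothesis $(2r+3)m\le k$ points the wrong way for lower-bounding $|V_i|=N/k$; what you need is the upper bound $k<(2r+3)(m+r+1)$, and then the inflation factor $(2r+4)/(2r+3)$ must absorb the multiplicative loss $m/(m+r+1)$ caused by rounding $m$ down to a multiple of $r+1$ (on top of the $(1+\xi)$ and trimming losses). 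This forces $m=\Omega(r^2)$, not $m=\Omega(r)$; the paper takes $\tilde m:=100r'^2$ for exactly this reason, and~\eqref{eq:Ramsey:m}, i.e.\ $\tfrac12 m\ge 2r^2+5r+3$, is precisely what legitimises the step $(2r+3)(m+r+1)\le(2r+3.5)m$ in the cluster-size estimate.

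Two smaller repairs. You fix $r:=\chi(H)$ and let every constant — including $\beta$, $\tilde m$, $k_0$, $m_0$ — depend on $r$; but the theorem demands that $n_0$ and $\beta$ depend on $a$ alone, and $H$ (hence $r$) is revealed only afterwards. Since $a$-arrangeable graphs are $a$-degenerate and hence $(a+1)$-colourable, set $r':=a+1$ and make all constant choices with $r'$, as the paper does (Lemma~\ref{lem:forH:2} is set up for exactly this: it takes $r'$ as input and then works for any $r\le r'$); alternatively take the worst case over the finitely many $r\le a+1$. Also, after trimming your pairs are only $(2\eps_{\mathrm{BU}},\tfrac12-\eps_{\mathrm{BU}})$-super-regular while Theorem~\ref{thm:BK:Blow-up:arr:full} was invoked with tolerance $\eps_{\mathrm{BU}}$, so you should feed a smaller regularity parameter into Lemma~\ref{lem:RamseyPartition} (and, as in the paper, additionally require $\eps\le\xi/(4r')$ so that the trimming loss $2r\eps$ is dominated by $\xi/2$ and $|V_i'|\ge(1+\xi)n/m\ge|W_i|$ survives). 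With $\tilde m=\Theta(r'^2)$ and these adjustments, your proposal becomes the paper's proof.
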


\begin{proof}
  Let $a$ be given and set $r'\deff a+1$ (and observe that every
  $a$-arrangeable graph is $r'$-colourable). 
  Set $\xi\deff 1/(100r')$. Choose~$\eps$ as given by
  Theorem~\ref{thm:BK:Blow-up:arr:full} with $C\deff 0$, $a$, $\Delta_R\deff 2r'+1$,
  $\kappa\deff 2$ and $\delta'\deff 1/4$, $c\deff 1$. If necessary decrease $\eps$ such
  that $\eps\le\xi/(4r')$. 
  Further set $\tilde m\deff 100r'^2$.  Let
  $n'_0$ and $k_0$ be as returned by Lemma~\ref{lem:RamseyPartition} for
  these $\eps,r',\tilde m$. Then continue the application of
  Theorem~\ref{thm:BK:Blow-up:arr:full} with $s:=k_0$ and obtain~$n_0''$.
  Set $m_0\deff k_0$ and
  \begin{equation*}
    n_0\deff\max\{n_0'\,,\, n_0''\,,\, 100 m_0r'\}\,.
  \end{equation*}
  Let $\beta>0$ be as given by Lemma~\ref{lem:forH:2} with parameters
  $\xi$, $r'$, and $m_0$.  Finally, let~$n$ and~$H$ be given, set
  $r:=\chi(H)$, and assume we have a red/blue-colouring of the edges of
  $K_{(2r+4)n}$.

  Lemma~\ref{lem:RamseyPartition}\ref{lem:RamseyPartition:a} asserts that
  there is a coloured $\eps$-regular partition $(V'_i)_{i\in[k]}$
  of~$K_{(2r+4)n}$ with $(2r+3)\tilde m\le k\le k_0$ whose clusters differ
  in size by at most~$1$.  Let~$R'$ be the coloured reduced graph of the
  partition $(V'_i)_{i\in[k]}$. Let~$m$ be the multiple of $r+1$ which
  satisfies $(2r+3)m \le k < (2r+3)(m+r+1)$.  Observe that this and $k\ge
  (2r+3)\tilde m$ implies $m\ge\tilde m-r$ and thus 
  \begin{equation}
  \label{eq:Ramsey:m}
    \frac12 m\ge\frac12(\tilde m-r)\ge 2r^2+5r+3
  \end{equation}
 because $\tilde m= 100r'^2\ge 100r^2$. Further, $m\le k\le k_0=m_0$ and so 
  \begin{equation}
  \label{eq:Ramsey:mn}
    \frac mn\le \frac{m_0}{n_0}
    \le\frac{1}{100 r'} \le \frac{1}{100 r}\,.
  \end{equation}
  We conclude that we have
  \begin{equation*}\begin{split}
    |V'_i|
    &\ge\frac{(2r+4)n}{k}-1
    \ge\frac{(2r+4)n}{(2r+3)(m+r+1)}-1
    \geByRef{eq:Ramsey:m} \frac{(2r+4)n}{(2r+3.5)m}-1 \\
    &= \Big(1+\frac{0.5}{2r+3.5}-\frac mn\Big)\frac{n}{m}
    \geByRef{eq:Ramsey:mn} \Big(1+\frac{1}{20r}-\frac{1}{100r}\Big)\frac{n}{m}
    \ge (1+2\xi)\frac{n}{m}
  \end{split}\end{equation*}
  because $\xi=1/(100r')\le 1/(100r)$.  In addition, by
  Lemma~\ref{lem:RamseyPartition}\ref{lem:RamseyPartition:b} there is a
  monochromatic $C_m^r$ in $R'$, without loss of generality a red
  $C_m^r$. Let $U\subseteq V(K_{(2r+4)n})$ be the set of all vertices contained in
  clusters of this~$C_m^r$.

  Our next step is to apply Lemma~\ref{lem:forH:2} to the graph~$H$ with
  parameters $\xi$, $r'$, $m_0$, $\beta$, $r$ and $m$.  This lemma
  guarantees a homomorphism $f\colon H \to C_m^r$ with $|f^{-1}(i)|\le
  (1+\xi) \tfrac nm$ for every $i\in[m]$.  By setting $W_i\deff f^{-1}(i)$
  we obtain a partition $(W_i)_{i\in V(C_m^r)}$ of~$H$.
  
  We finish the proof with an application of
  Theorem~\ref{thm:BK:Blow-up:arr:full}.
  %
  %
  In this application we will not have image restricted vertices and we
  will use $R:=C_m^r$. Observe that $\Delta(R)=2r<\Delta_R$ and thus
  \ref{item:Blow-up:R} of Theorem~\ref{thm:BK:Blow-up:arr:full} is
  satisfied. The partition $(W_i)_{i\in V(C_m^r)}$ and the conditions
  on~$H$ guarantee that also condition~\ref{item:Blow-up:H} of
  Theorem~\ref{thm:BK:Blow-up:arr:full} is satisfied.

  Now let $G'$ be the subgraph of $K_n$ with vertices~$U$ and all red edges
  of~$K_{(2r+4)n}$ in~$U$. In the following we consider this graph as an uncoloured
  graph.  Clearly the partition $(V'_i)_{i\in[k]}$ induces a partition
  $(V'_i)_{i\in V(C_m^r)}$ of $G'$ which is $(\eps,\frac12)$-regular
  on~$C_m^r$.  Moreover, since~$C_m^r$ has maximum degree~$2r$, by deleting
  from each of these clusters $V'_i$ at most $2r\eps|V'_i|\le\frac12\xi|V'_i|$
  vertices we can obtain a partition $(V_i)_{i\in V(C_m^r)}$ of a
  subgraph~$G$ of~$G'$ which is $(\eps,\frac 14)$-super-regular on~$C_m^r$
  and satisfies $|V_i|\ge(1+\xi)\frac{n}{m}\ge|W_i|$. Hence for~$G$ and
  $(V_i)_{i\in V(C_m^r)}$ also condition~\ref{item:Blow-up:G} of
  Theorem~\ref{thm:BK:Blow-up:arr:full} is satisfied.

  Thus Theorem~\ref{thm:BK:Blow-up:arr:full} implies that there is a copy
  of~$H$ in~$G$. This copy corresponds to a red copy of~$H$ in the
  red/blue-coloured~$K_{(2r+4)n}$.
\end{proof}

\section{Concluding remarks}
\label{sec:concl}

\paragraph{Optimality of Theorem~\ref{thm:BolKom:arr}.}

The degree bound $\Delta(H)\le\sqrt{n}/\log n$ in
Theorem~\ref{thm:BolKom:arr} arises from our proof method: For the Blow-up
Lemma, Theorem~\ref{thm:BK:Blow-up:arr:full}, such a degree bound is necessary
(see~\cite[Proposition~35]{BKTW_blowup}). For trees~$H$, however, 
the corresponding result of Koml\'os, Sark\"ozy, and Szemer\'edi~\cite{KSS_trees} 
requires only the weaker condition $\Delta(H)=o(n/\log n)$.
It is thus well possible that our maximum degree condition is not best
possible and could be improved to $o(n/\log n)$.

\paragraph{Blow-up Lemmas.}

In the original formulation of the Blow-up
Lemma~\cite{KSS97,KSS98,RodlRuci99} the regularity~$\eps$ required for the
super-regular pairs depends on the number of clusters~$k'$ used in an
application. Consequently, this lemma can never
be used on the \emph{whole cluster graph} obtained from an application of
the Regularity Lemma: the number of clusters~$k$ the Regularity Lemma
produces depends on the required regularity~$\eps$. Moreover, all pairs
used in the embedding have to be super-regular.

The Blow-up Lemma for arrangeable graphs formulated in~\cite{BKTW_blowup}
overcomes the first difficulty: Here~$\eps$ only depends on the maximum degree
of the reduced graph of the super-regular partition that is used.
(In fact, fairly straight-forward modifications of the original Blow-up
Lemma proof from~\cite{KSS97} would also allow for a corresponding result
for bounded degree graphs.)

In Theorem~\ref{thm:Blow-up:arr:mixed} we also overcome the second
difficulty: Pairs into which we only want to embed few edges are now
allowed to be merely $\eps$-regular.
This allows us to avoid the occasionally tedious procedure of setting up
suitable image restrictions and then applying the Blow-up Lemma several times.
This might turn out could be useful for other applications as well.

\paragraph{Degeneracy.}

Though by now many important graph classes were shown to be $a$-arrangeable
for some constant~$a$, the notion of arrangeability has the disadvantage of
seeming somewhat artificial at first sight. The notion of degeneracy is
more natural (and more general):
A graph~$H$ is $d$-degenerate if there is an ordering of its vertices such
that each vertex has at most~$d$ neighbours to its left.

It would be very interesting to obtain an
analogue of Theorem~\ref{thm:BolKom:arr} for $d$-degenerate graphs.
However, most likely this problem is very hard. Indeed, a version of the Blow-up
Lemma for $d$-degenerate graphs would imply the difficult and long-standing
Burr-Erd\"os conjecture~\cite{BurrErdos}, which states that degenerate
graphs have linear Ramsey number.

\paragraph{Acknowledgement.}

The authors thank Peter Allen for suggesting Theorem~\ref{thm:app:2}.

\bibliographystyle{siam}   
\bibliography{SpanningEmbeddings}  

\end{document}